\title{Trace conjunction inequalities}
\author{Jean Van Schaftingen}
\address{Universit\'e catholique de Louvain\\ 
Institut de Recherche en Math\'ematique et Physique\\
Chemin du Cyclotron 2 bte L7.01.01\\
1348 Louvain-la-Neuve\\
Belgium}
\email{Jean.VanSchaftingen@UCLouvain.be}
\newcommand{\dimdom}{N}
\renewcommand{\PrintDOI}[1]{%
  \href{https://doi.org/#1}{doi:#1}%
}
\newtheorem{theorem}{Theorem}[section]
\newtheorem{proposition}[theorem]{Proposition}
\newtheorem{lemma}[theorem]{Lemma}
\theoremstyle{definition}
\newtheorem{openproblem}{Open Problem}
\theoremstyle{remark}
\numberwithin{equation}{section}
\dedicatory{In blessed memory of Haïm Brezis\\ and his joy of asking and solving problems}
\DeclarePairedDelimiter{\brk}{(}{)}
\DeclarePairedDelimiter{\sqb}{[}{]}
\DeclarePairedDelimiter{\abs}{\lvert}{\rvert}
\DeclarePairedDelimiter{\norm}{\lVert}{\rVert}
\DeclarePairedDelimiterX{\intvc}[2]{[}{]}{#1,#2}
\DeclarePairedDelimiterX{\intvl}[2]{(}{]}{#1,#2}
\DeclarePairedDelimiterX{\intvr}[2]{[}{)}{#1,#2}
\DeclarePairedDelimiterX{\intvo}[2]{(}{)}{#1,#2}
\DeclarePairedDelimiterX{\setcond}[2]{\{}{\}}{#1 \,\delimsize\vert\, #2}
\newcommand\stSymbol[1][]{%
\nonscript\;#1\vert
\allowbreak
\nonscript\;
\mathopen{}}
\DeclarePairedDelimiterX\set[1]\{\}{%
\renewcommand\st{\stSymbol[\delimsize]}
#1
}
\providecommand{\st}{\stSymbol}
\newcommand{\defeq}{\coloneqq}
\newcommand{\Rset}{\mathbb{R}}
\newcommand{\Nset}{\mathbb{N}}
\newcommand{\Bset}{\mathbb{B}}
\newcommand{\dif}{\,\mathrm{d}}
\DeclareMathOperator*{\trace}{tr}
\newcommand{\Deriv}{\mathrm{D}}
\DeclareMathOperator{\supp}{supp}
\newcommand{\sobolev}{\smash{\dot{W}}}
\begin{document}
\begin{abstract}
Trace conjunction integrals are introduced and studied.
They appear in trace conjunction inequalities which unify the Hardy inequality on a halfspace and the classical Gagliardo trace inequality.
At the endpoint they satisfy a Bourgain-Brezis-Mironescu formula for smooth maps, which raises some new open problems.
\end{abstract}

\keywords{Sobolev space; Hardy inequality; trace inequality; fractional Sobolev spaces.}
\subjclass[2020]{46E35}
\thanks{J. Van Schaftingen was supported by the Projet de Recherche T.0229.21 ``Singular Harmonic Maps and Asymptotics of Ginzburg--Landau Relaxations'' of the Fonds de la Recherche Scientifique--FNRS}

\maketitle

\section{Introduction}
The \emph{homogeneous Sobolev space}
\begin{equation}
 \sobolev^{1, p} \brk{\Rset^{\dimdom}_+}
 \defeq 
 \set[\bigg]{u \colon \Rset^{\dimdom}_+ \to \Rset \st
 u \text{ is weakly differentiable and } 
 \int_{\Rset^{\dimdom}_+} \abs{\Deriv u}^p < \infty
 },
\end{equation}
with \(\dimdom \in \Nset \setminus \set{0, 1}\),
\(\Rset^{\dimdom}_+ = \Rset^{\dimdom-1} \times \intvo{0}{\infty}\) and \(p > 1\), consists a priori of equivalence classes of merely measurable functions that should not have well-defined restrictions to Lebesgue null sets.
However the integrability condition on the derivative has long been known to allow the definition of \emph{traces} on lower-dimensional sets such as the boundary \(\partial \Rset^{\dimdom}_+ \simeq \Rset^{\dimdom - 1}\) \cite{Stampacchia_1952} (see also \citelist{\cite{Brezis_2011}*{lem.\ 9.9}\cite{Willem_2022}*{prop.\ 6.2.3}}).

Since the works of Gagliardo \cite{Gagliardo_1957} (see also \citelist{\cite{Aronszajn_1955}\cite{Prodi_1957}\cite{Slobodecki_Babich_1956}} for \(p = 2\)), it has been known that if \(p > 1\), every function \(u \in \sobolev^{1, p} \brk{\smash{\Rset^{\dimdom}_+}}\) has a trace \(v = \smash{\trace_{\partial \Rset^{\dimdom}_+} u}
\in \sobolev^{1-1/p, p}\brk{\smash{\partial \Rset^{\dimdom}_+}}\),
where the \emph{homogeneous fractional Sobolev space} \(\smash{\sobolev^{s, p}} \brk{\Rset^{\ell}}\) is defined for \(\ell \in \Nset \setminus \set{0}\),
\(s \in \intvo{0}{1}\) and \(p \in \intvr{1}{\infty}\) as
\begin{equation}
\label{eq_iecai9naechie0pei5Noohae}
 \sobolev^{s, p} \brk{\Rset^{\ell}}
 \defeq 
 \set[\bigg]{v \colon \Rset^{\ell} \to \Rset \st 
 \smashoperator{\iint_{\Rset^{\ell}\times \Rset^{\ell}}}
 \frac{\abs{v \brk{x} - v \brk{y}}^p}{\abs{x - y}^{\ell + sp}}\dif x \dif y < \infty}.
\end{equation}
Moreover, the trace operator \(\smash{\trace_{\partial \Rset^{\dimdom}_+}} \colon \sobolev^{1, p} \brk{\Rset^{\dimdom}_+}
\to \sobolev^{1-1/p, p}\brk{\partial \Rset^{\dimdom}_+}\) is continuous:
there is some constant \(C \in \intvo{0}{\infty}\) such that
if \(u \in \sobolev^{1, p} \brk{\Rset^{\dimdom}_+}\) and \(v = \smash{\trace_{\partial \Rset^{\dimdom}_+} u}\),
the \emph{Gagliardo integral} which appears in the definition  \eqref{eq_iecai9naechie0pei5Noohae} of \(\sobolev^{1 - 1/p, p} \brk{\partial \Rset^{\dimdom}_+}\) satisfies the trace inequality
\begin{equation}
\label{eq_Epieshu8ohr2iecuewei1eed}
\smashoperator{
 \iint_{\partial \Rset^{\dimdom}_+ \times\partial  \Rset^{\dimdom}_+}}
 \frac{\abs{v \brk{x} - v \brk{y}}^p}{\abs{x - y}^{\dimdom + p - 2}}\dif x \dif y
 \le C \int_{\Rset^{\dimdom}_+}
 \abs{\Deriv u}^p.
\end{equation}
The trace operator \(\smash{\trace_{\partial \Rset^{\dimdom}_+}}\) also has a linear continuous right inverse that provides an extension of boundary data. At the endpoint \(p = 1\), the range of trace turns out to be \(L^1 \brk{\partial \Rset^{\dimdom}_+}\) \cite{Gagliardo_1957} (see also \cite{Mironescu_2015}) without a corresponding linear continuous right inverse \cite{Peetre_1979} (see also \cite{Pelczynski_Wojciechowski_2002}).

The aim of the present work is to introduce the \emph{trace conjunction integral} that connects a function to its trace quantitatively and qualitatively.
Our starting point is the inequality
\begin{equation}
\label{eq_eexuevaeyeaghooPhie6ohwa}
  \int_{\partial \Rset^{\dimdom}_+} 
 \brk[\bigg]{\int_{\Rset^{\dimdom}_+}
 \frac{\abs{v \brk{x}- u \brk{y}}^p}
 {\abs{x - y}^{\dimdom + p - 1}}
 \dif y}
 \dif x
 \le
 C 
 \int_{\Rset^{\dimdom}_+} \abs{\Deriv u }^p,
\end{equation}
for \(u \in \sobolev^{1, p} \brk{\Rset^{\dimdom}_+}\) and \(v = \trace_{\partial \Rset^{\dimdom}} u\) (see \cref{theorem_first_order_trace_conjunction} below).
The trace conjunction integral is the quantity on the left-hand side of \eqref{eq_eexuevaeyeaghooPhie6ohwa}.
It is a kind of mixed double integral which integrates the distance between the boundary values and the interior values.

Trace conjunction integrals have already been known to  appear in Fubini type arguments, where one writes
\begin{equation}
\label{eq_ohghuvei8aeJoz8aipoxeich}
 \smashoperator{\iint_{\Rset^{\dimdom}\times \Rset^{\dimdom}}}
 \frac{\abs{u \brk{x} - u \brk{y}}^p}{\abs{x - y}^{\dimdom + sp}}\dif x \dif y
 = 
 \int_{\Rset^{\dimdom - \ell}}
 \int_{\Rset^{\ell} \times \set{x''}}
 \int_{\Rset^{\dimdom}}
 \frac{\abs{u \brk{x} - u \brk{y}}^p}{\abs{x - y}^{\dimdom + sp}}\dif y \dif x' \dif x'',
\end{equation}
with \(x = \brk{x', x''}\),
before applying a triangle inequality argument to rewrite the two innermost integrals of the right-hand side of \eqref{eq_ohghuvei8aeJoz8aipoxeich} as a double integral on \(\brk{\Rset^{\ell} \times \set{x''}}^2\) (see for example \cite{VanSchaftingen_2024}*{prop.\ 5.9});
a similar argument proves
that the trace conjunction integral on the left-hand side of \eqref{eq_eexuevaeyeaghooPhie6ohwa}
controls the Gagliado energy on the left-hand side of \eqref{eq_Epieshu8ohr2iecuewei1eed}  (\cref{theorem_mixed_to_gagliardo}), so that \eqref{eq_eexuevaeyeaghooPhie6ohwa} implies \eqref{eq_Epieshu8ohr2iecuewei1eed}.
The trace conjunction inequality \eqref{eq_eexuevaeyeaghooPhie6ohwa} turns out to provide an interesting route to the trace inequality \eqref{eq_Epieshu8ohr2iecuewei1eed}, with a proof of \eqref{eq_eexuevaeyeaghooPhie6ohwa} that avoids the need to introduce somehow artificial interior intermediate points in the middle of the proof and treat similarly two symmetric terms.

The trace conjunction inequality \eqref{eq_eexuevaeyeaghooPhie6ohwa} also contains the information that \(v\) is the trace of \(u\), in the sense that if \(v \colon \partial \Rset^{\dimdom}_+\to \Rset\) is any function for which the left-hand side of \eqref{eq_eexuevaeyeaghooPhie6ohwa} is finite, then \(v\) is necessarily the trace of \(u\) (\cref{theorem_trace_characterisation}).

The trace conjunction inequality \eqref{eq_eexuevaeyeaghooPhie6ohwa} is also connected to the classical \emph{Hardy inequality} \citelist{\cite{Hardy_1920}\cite{Hardy_Littlewood_Polya}*{th.\ 327}} (see also \citelist{\cite{Mironescu_2018}\cite{Davies_1998}\cite{Kufner_Maligranda_Persson_2007}})
\begin{equation}
\label{eq_maiHieloib3eijohgiephahc}
 \int_{\Rset^{\dimdom}_+}
 \frac{\abs{v \brk{x'} - u \brk{x}}^p}{x_{\dimdom}^{p}} \dif x
 \le \brk[\bigg]{\frac{p}{p - 1}}^p
 \int_{\Rset^{\dimdom}_+}
 \abs{\Deriv u}^p,
\end{equation}
for \(u \in \sobolev^{1, p} \brk{\Rset^{\dimdom}_+}\), \(v = \smash{\trace_{\partial \Rset^{\dimdom}_+}}u\) and
with \(x = \brk{x', x_{\dimdom}}\).
Indeed, a straightforward argument shows that the left-hand side of the Hardy inequality \eqref{eq_maiHieloib3eijohgiephahc} is controlled by the trace conjunction integral on the left-hand side of \eqref{eq_eexuevaeyeaghooPhie6ohwa},
 so that the trace conjunction inequality \eqref{eq_eexuevaeyeaghooPhie6ohwa} implies the Hardy inequality \eqref{eq_maiHieloib3eijohgiephahc}.
Compared with the Hardy inequality \eqref{eq_maiHieloib3eijohgiephahc}, the trace conjunction inequality \eqref{eq_eexuevaeyeaghooPhie6ohwa} is invariant under suitable change of coordinates by a diffeomorphism, which makes it more appealing in more geometrical contexts.

As the classical trace \eqref{eq_Epieshu8ohr2iecuewei1eed} and Hardy   \eqref{eq_maiHieloib3eijohgiephahc} inequalities could be derived from the trace conjunction inequality \eqref{eq_eexuevaeyeaghooPhie6ohwa},
the latter can be derived from the two former in the sense that the left-hand side of \eqref{eq_eexuevaeyeaghooPhie6ohwa} can be controlled by the left-hand side of \eqref{eq_Epieshu8ohr2iecuewei1eed} and \eqref{eq_maiHieloib3eijohgiephahc} (\cref{theorem_Hardy_Gagliardo_conjunction}).

Besides the first-order Sobolev spaces \(\sobolev^{1, p} \brk{\Rset^{\dimdom}_+}\) considered above, other flavours of Sobolev spaces are known to have traces.
We prove trace conjunction inequalities similar to \eqref{eq_eexuevaeyeaghooPhie6ohwa} for some wider range of weighted Sobolev spaces (\cref{theorem_first_order_trace_conjunction}) and fractional Sobolev spaces (\cref{theorem_fractional_trace_conjunction}).

As further applications, the sharp connection between a function and its trace expressed by the trace conjunction integral could be used to prove in the critical case \(p = \dimdom\) and \(s = 1 - 1/p\), or even more generally \(sp = \dimdom -1\), that the trace works well in the framework of functions of vanishing mean oscillation \citelist{\cite{Brezis_Nirenberg_1996}\cite{Mazowiecka_VanSchaftingen_2023}*{prop.\ 2.8}}.

\medskip

The final part of this work starts from the \emph{Bourgain-Brezis-Mironescu characterisation of Sobolev spaces} \citelist{\cite{Bourgain_Brezis_Mironescu_2001}\cite{Brezis_2002}\cite{Brezis_Mironescu_2021}*{th.\ 6.2}}:
if \(u \in \sobolev^{1, p} \brk{\Rset^{\dimdom}} \cap L^p \brk{\Rset^{\dimdom}}\), then
\begin{equation}
\label{eq_zaewahnoiKung3quiePhie5u}
 \lim_{s \underset{<}{\to}  1}\;
 \brk{1 - s}
 \smashoperator{\iint_{\Rset^{\dimdom} \times \Rset^{\dimdom}}}
 \frac{\abs{u \brk{x} - u \brk{y}}^p}{\abs{x - y}^{\dimdom + sp}}
 \dif x \dif y
 = \frac{2{} \pi^{\frac{\dimdom - 1}{2}} \Gamma \brk[\big]{\frac{p + 1}{2}}}
{p \Gamma \brk[\big]{\frac{\dimdom + p}{2}}}
\int_{\Rset^{\dimdom}} \abs{\Deriv u}^p.
\end{equation}
and conversely, if the function \(u \colon \Rset^{\dimdom} \to \Rset\) is measurable and if
\[
  \liminf_{s \underset{<}{\to} 1}\;
  \brk{1 - s}
 \smashoperator{\iint_{\Rset^{\dimdom} \times \Rset^{\dimdom}}}
 \frac{\abs{u \brk{x} - u \brk{y}}^p}{\abs{x - y}^{\dimdom + sp}}
 \dif x \dif y < \infty,
\]
then \(u \in \sobolev^{1, p} \brk{\Rset^{\dimdom}}\) and
\[
 \frac{2{} \pi^{\frac{\dimdom - 1}{2}} \Gamma \brk{\frac{p + 1}{2}}}
{p \Gamma \brk{\frac{\dimdom + p}{2}}}
\int_{\Rset^{\dimdom}} \abs{\Deriv u}^p
\le  \liminf_{s \underset{<}{\to} 1}\;
  \brk{1 - s}
 \smashoperator{\iint_{\Rset^{\dimdom} \times \Rset^{\dimdom}}}
 \frac{\abs{u \brk{x} - u \brk{y}}^p}{\abs{x - y}^{\dimdom + sp}}
 \dif x \dif y < \infty.
\]

For trace conjunction integrals, we prove that if \(u \in C^1_c \brk{\Bar{\Rset}^{\dimdom}_+}\), then (\cref{theorem_BBM_formula})
\begin{equation}
\label{eq_emiaQuohtohseitaeph1ahP4}
 \lim_{s \underset{<}{\to} 1}\; \brk{1 - s}
 \int_{\partial \Rset^{\dimdom}_+} \brk[\bigg]{\int_{\Rset^{\dimdom}_+} \frac{\abs{u \brk{x} - u \brk{y}}^p}{\abs{x - y}^{\dimdom + sp}} \dif y} \dif x
 =
 \frac{\pi^{\frac{\dimdom - 1}{2}} \Gamma \brk[\big]{\frac{p + 1}{2}}}
{p \Gamma \brk[\big]{\frac{\dimdom + p}{2}}}
 \int_{\partial \Rset^{\dimdom}_+} \abs{\Deriv u}^p.
\end{equation}
Compared to the classical Bourgain-Brezis-Mironescu formula \eqref{eq_zaewahnoiKung3quiePhie5u}, the right-hand side of its trace conjunction counterpart \eqref{eq_emiaQuohtohseitaeph1ahP4} features an integral that is only performed on the  \emph{boundary} but an integrand involving both the \emph{tangential} and \emph{normal} components of the derivative.
The identity \ref{eq_emiaQuohtohseitaeph1ahP4} is currently only proved for \emph{smooth} functions.

\begin{openproblem}
Does \eqref{eq_emiaQuohtohseitaeph1ahP4} still hold when \(u\) belongs to some suitable Sobolev space?
\end{openproblem}

Thanks to the relationship between the conjunction integral and the Gagliardo energy, we also prove in \cref{theorem_BBM_liminf} that if \(p > 1\) and if
\begin{equation}
\label{eq_ohgai8lie7nooGhi5iW2Xoh3}
  \liminf_{s \underset{<}{\to} 1}\; \brk{1 - s}
 \int_{\partial \Rset^{\dimdom}_+} \brk[\bigg]{\int_{\Rset^{\dimdom}_+ } \frac{\abs{v \brk{x} - u \brk{y}}^p}{\abs{x - y}^{\dimdom + sp}} \dif y} \dif x
 < \infty,
\end{equation}
then one has \(v \in \sobolev^{1, p}\brk{\partial \Rset^{\dimdom}_+}\); following \citelist{\cite{Davila_2002}\cite{VanSchaftingen_Willem_2004}\cite{Ponce_2004}}, when \(p = 1\) and \eqref{eq_ohgai8lie7nooGhi5iW2Xoh3} holds, the function \(v\) lies in the space of functions of bounded variation \(BV \brk{\partial \Rset^{\dimdom}_+}\) (\cref{theorem_BBM_liminf_BV}).

We do not expect these results to be complete, as they do not provide any of the information on the normal derivative that the identity \eqref{eq_emiaQuohtohseitaeph1ahP4} suggests.
This raises the question about more precise results.

\begin{openproblem}
Prove that the condition \eqref{eq_usohVeh9siqui5Zahahtae4X}
implies the existence of a normal derivative of \(u\) in some weak sense.
\end{openproblem}

At the other endpoint of the range \(s \in \intvo{0}{1}\) it would also make sense to study the limit
\[
  \lim_{s \underset{>}{\to} 0}\;
  s\int_{\partial \Rset^{\dimdom}_+} \brk[\bigg]{\int_{\Rset^{\dimdom}_+} \frac{\abs{u \brk{x} - u \brk{y}}^p}{\abs{x - y}^{\dimdom + sp}} \dif y} \dif x,
\]
following the \emph{Maz\/\cprime{}\!ya-Shaposhnikova formula} for fractional Sobolev spaces \cite{Mazya_Shaposhnikova_2002}.

Further interesting problems involve studying the counterparts of the conjunction integral inspired by other
the characterisation of Sobolev spaces.
The \emph{Nguyen formula} \cite{Nguyen_2006} (see also \citelist{\cite{Brezis_Nguyen_2018}\cite{Bourgain_Nguyen_2006}\cite{Nguyen_2008}})
\begin{equation}
\label{eq_Athe3xohyaen7ohJu2eiX8Za}
\lim_{\delta \to 0}
 \smashoperator[r]{\iint_{\substack{x, y \in \Rset^{\dimdom}\\ \abs{u \brk{x} - u \brk{y}}\ge \delta }}}
 \frac{\delta^p}{\abs{x - y}^{\dimdom + p}} \dif x \dif y
 =
 \frac{2 \pi^{\frac{\dimdom - 1}{2}} \Gamma \brk{\frac{p + 1}{2}}}
{p \Gamma \brk{\frac{\dimdom + p}{2}}}
\int_{\Rset^{\dimdom}} \abs{\Deriv u}^p,
\end{equation}
hints at investigating the limit
\begin{equation}
\lim_{\delta \to 0}
 \smashoperator[r]{\iint_{\substack{x \in \partial \Rset^\dimdom_+, y \in \Rset^{\dimdom}_+\\ \abs{v \brk{x} - u \brk{y}}\ge \delta}}}
 \frac{\delta^p}{\abs{x - y}^{\dimdom + p}} \dif x \dif y.
\end{equation}
Similarly,
the \emph{Brezis-Van Schaftingen-Yung formula} \cite{Brezis_VanSchaftingen_Yung_2021_PNAS} (see also \cite{Brezis_VanSchaftingen_Yung_2021_CVPDE})
\begin{equation}
\label{eq_onohzeeyi2eep3ootee5JaeT}
\lim_{\lambda \to \infty}
\lambda^p\, \abs[\bigg]{\set[\bigg]{\brk{x, y} \in \Rset^{\dimdom} \times \Rset^{\dimdom}\st
\frac{\abs{u \brk{x} - u \brk{y}}^p}{\abs{x - y}^{\dimdom + p}} \ge \lambda^p }}
= \frac{2\pi^{\frac{\dimdom - 1}{2}} \Gamma \brk{\frac{p + 1}{2}}}
{\dimdom\, \Gamma \brk{\frac{\dimdom + p}{2}}}
\int_{\Rset^{\dimdom}} \abs{\Deriv u}^p
.
\end{equation}
suggests studying the limit
\begin{equation}
\lim_{\lambda \to \infty}
\lambda^p\, \abs[\bigg]{\set[\bigg]{\brk{x, y} \in \partial \Rset^{\dimdom}_+ \times \Rset^{\dimdom}_+ \st
\frac{\abs{v \brk{x} - u \brk{y}}^p}{\abs{x - y}^{\dimdom + p}} \ge \lambda^p }}.
\end{equation}
Even more generally, it would make sense to study conjunction integral counterparts for the full family of \emph{Brezis-Seeger-Van Schaftingen-Yung formulae} that includes \eqref{eq_Athe3xohyaen7ohJu2eiX8Za} and \eqref{eq_onohzeeyi2eep3ootee5JaeT} \citelist{\cite{Brezis_Seeger_VanSchaftingen_Yung_2022}\cite{Brezis_Seeger_VanSchaftingen_Yung_2024}} (see also \cite{Dominguez_Seeger_Street_VanSchaftingen_Yung_2023}).

\medskip

The author thanks Petru Mironescu, Vitaly Moroz and Po-Lam Yung for enlightening discussions.

\section{First-order trace conjunction inequality}
Wotivated by Uspenskiĭ's  weighted version of the trace inequality \eqref{eq_Epieshu8ohr2iecuewei1eed} \cite{Uspenskii_1961} (see also \cite{Mironescu_Russ_2015}):
if the function \(u \colon \Rset^{\dimdom}_+ \to \Rset\) is weakly differentiable and if
\[
 \int_{\Rset^{\dimdom}_+} \frac{\abs{\Deriv u \brk{z}}^p}{
 z_{\dimdom}^{1  - (1 -s)p} } \dif z < \infty,
\]
then \(u\) has a trace \(v = \trace_{\partial \Rset^{\dimdom}_+} u\) and
\begin{equation}
\label{eq_phaeW8Ualoh6kaiph3ahwaeg}
 \smashoperator{\iint_{\partial \Rset^{\dimdom}_+ \times \partial \Rset^{\dimdom}_+ }}
 \frac{\abs{v \brk{x} - v \brk{y}}^p}
 {\abs{x - y}^{\dimdom - 1+ sp}}
 \dif y
 \dif x
 \le
 C
 \int_{\Rset^{\dimdom}_+} \frac{\abs{\Deriv u \brk{z}}^p}{
 z_{\dimdom}^{1  - (1 -s)p} } \dif y,
\end{equation}
we will prove the following trace conjunction inequality \eqref{eq_eexuevaeyeaghooPhie6ohwa}.

\begin{theorem}
\label{theorem_first_order_trace_conjunction}
If \(\dimdom \in \Nset \setminus \set{0, 1}\), if \(s \in \intvo{0}{1}\) and if \(p \in \intvo{1}{\infty}\), then every weakly differentiable function \(u \colon \Rset^{\dimdom}_+ \to \Rset\) satisfying
\[
  \int_{\Rset^{\dimdom}_+} \frac{\abs{\Deriv u \brk{z}}^p}{
 z_{\dimdom}^{1  - (1 -s)p} } \dif z < \infty
\]
has a trace \(v = \trace_{\partial \Rset^{\dimdom}_+} u\) satisfying
\begin{equation}
\label{eq_EMohhuduothaey8xee9Oshi1}
 \int_{\partial \Rset^{\dimdom}_+}
 \brk[\bigg]{\int_{\Rset^{\dimdom}_+}
 \frac{\abs{v \brk{x} - u \brk{y}}^p}
 {\abs{x - y}^{\dimdom + sp}}
 \dif y}
 \dif x
 \le
 \frac{\pi^\frac{\dimdom -1}{2}\Gamma \brk{\frac{sp + 1}{2}}}{\Gamma \brk{\frac{\dimdom + sp}{2}}} \brk[\bigg]{\frac{\dimdom}{s}}^p
 \int_{\Rset^{\dimdom}_+} \frac{\abs{\Deriv u \brk{z}}^p}{
 z_{\dimdom}^{1  - (1 -s)p} } \dif y.
\end{equation}
\end{theorem}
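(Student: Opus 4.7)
The plan is to decompose the integrand by triangle inequality using the orthogonal projection $y^\ast := (y', 0) \in \partial \Rset^{\dimdom}_+$ of the interior point $y = (y', y_{\dimdom})$ onto the boundary. This gives $|v(x) - u(y)|^p \le 2^{p-1}(|v(x) - v(y^\ast)|^p + |v(y^\ast) - u(y)|^p)$, so the left-hand side $I$ of \eqref{eq_EMohhuduothaey8xee9Oshi1} splits as $I \le 2^{p-1}(I_1 + I_2)$. The two pieces are then handled by complementary integration schemes.

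For the tangential piece $I_1$, since $|v(x) - v(y^\ast)|^p$ does not involve $y_{\dimdom}$, integrating $y_{\dimdom}$ out first via the substitution $y_{\dimdom} = |x-y'|\tau$ yields a Beta-function factor and reduces $I_1$ to a constant multiple of the Gagliardo $\sobolev^{s,p}(\partial \Rset^{\dimdom}_+)$-seminorm of $v$, built from the kernel $|x-y'|^{-(\dimdom-1+sp)}$, which is controlled by Uspenskii's weighted trace inequality \eqref{eq_phaeW8Ualoh6kaiph3ahwaeg}.

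For the normal piece $I_2$, since $|v(y^\ast) - u(y)|^p$ does not involve $x$, integrating $x$ out via the substitution $x - y' = y_{\dimdom}\xi$ produces the identity
\[
 \int_{\partial \Rset^{\dimdom}_+}(|x-y'|^2 + y_{\dimdom}^2)^{-(\dimdom + sp)/2}\,\dif x = \frac{\pi^{\frac{\dimdom-1}{2}}\, \Gamma(\frac{sp+1}{2})}{\Gamma(\frac{\dimdom + sp}{2})}\, y_{\dimdom}^{-sp-1},
\]
which is exactly the prefactor appearing in the theorem's constant. The remaining quantity $\int_{\Rset^{\dimdom}_+}|v(y^\ast) - u(y)|^p\, y_{\dimdom}^{-sp-1}\,\dif y$ is then handled by applying the sharp weighted one-dimensional Hardy inequality in the normal variable to $t \mapsto u(y', t) - v(y')$ (which vanishes at $t = 0$ by the trace property), producing the factor $(1/s)^p$ and a bound by $\int_{\Rset^{\dimdom}_+}|\Deriv u|^p\, z_{\dimdom}^{(1-s)p - 1}\,\dif z$.

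The main obstacle is obtaining the precise constant $\frac{\pi^{\frac{\dimdom-1}{2}}\Gamma((sp+1)/2)}{\Gamma((\dimdom+sp)/2)} (\dimdom/s)^p$ stated in the theorem. While the split approach above yields an upper bound of the desired structural form, matching the exact factor $(\dimdom/s)^p$ requires careful tracking through Uspenskii's inequality and the triangle-inequality constants; a cleaner route may be a unified argument applying the weighted Hardy inequality along rays emanating from each boundary point in every direction $\omega \in S^{\dimdom-1}_+ := \{\omega \in S^{\dimdom-1} : \omega_{\dimdom} > 0\}$ and using polar coordinates, at the cost of having to control $\int_{S^{\dimdom-1}_+} \omega_{\dimdom}^{-(1-s)p}|\omega \cdot \Deriv u(z)|^p\,\dif\omega$ uniformly in $z$, a quantity whose crude $|\omega \cdot \Deriv u| \le |\Deriv u|$ bound already fails to be integrable in the near-tangential regime when $s \le 1 - 1/p$. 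Reconciling this anisotropic spherical estimate with the sharp constant is the principal technical difficulty.
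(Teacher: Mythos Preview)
Your triangle-inequality split through $y^\ast=(y',0)$ is sound and does yield \emph{a} bound, but as you already note it cannot recover the stated constant: the factor $2^{p-1}$ from convexity and the implicit constant in Uspenski\u\i's inequality \eqref{eq_phaeW8Ualoh6kaiph3ahwaeg} for $I_1$ are losses that do not cancel. (Invoking \eqref{eq_phaeW8Ualoh6kaiph3ahwaeg} is also somewhat against the spirit of the paper, since one of the purposes of \eqref{eq_EMohhuduothaey8xee9Oshi1} is to provide a new route to the trace inequality.) Your diagnosis of the ray-based alternative is correct as well: applying one-dimensional Hardy along each direction $\omega$ and changing variables back does produce the spherical integral $\int_{\Sset^{\dimdom-1}_+}\omega_{\dimdom}^{-(1-s)p}\abs{\omega\cdot\Deriv u(z)}^p\dif\omega$, and since $\omega\cdot\Deriv u(z)$ is generically nonzero as $\omega_{\dimdom}\to 0$ (it tends to the tangential derivative), this integral genuinely diverges once $(1-s)p\ge 1$. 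So you have correctly located the obstruction but not the device that removes it.

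The paper's missing ingredient is a genuinely $\dimdom$-dimensional weighted Hardy inequality (\cref{proposition_Hardy}),
\[
\int_{\Rset^{\dimdom}_+}\frac{\abs{u(y)-u(0)}^p}{\abs{y}^{\dimdom+sp}}\dif y
\;\le\;\brk[\bigg]{\frac{\dimdom}{s}}^p
\int_{\Rset^{\dimdom}_+}\frac{\abs{\Deriv u(y)}^p\,y_{\dimdom}^p}{\abs{y}^{\dimdom+sp}}\dif y,
\]
proved not by integrating along rays but by the divergence theorem with the vector field
$\xi(y)=y_{\dimdom}e_{\dimdom}\abs{y}^{-\dimdom-sp}-\brk{1+\tfrac{\dimdom}{sp}}y_{\dimdom}^{2}\,y\,\abs{y}^{-\dimdom-sp-2}$,
chosen so that $\operatorname{div}\xi(y)=\abs{y}^{-\dimdom-sp}$ while $\abs{\xi(y)}\le\tfrac{\dimdom}{sp}\,y_{\dimdom}\abs{y}^{-\dimdom-sp}$. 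The anisotropic weight $y_{\dimdom}^p/\abs{y}^{\dimdom+sp}$ on the right is strictly smaller than the isotropic weight $\abs{y}^{-\dimdom+(1-s)p}$ of the classical Hardy inequality \eqref{eq_aim7reifatei2booG5aubee2}, and this extra tangential decay is exactly what rescues the integrability your ray approach loses. The proof of \eqref{eq_EMohhuduothaey8xee9Oshi1} is then a two-line computation: translate the base point of \cref{proposition_Hardy} to each $x\in\partial\Rset^{\dimdom}_+$, integrate over $x$, swap the order of integration, and evaluate $\int_{\partial\Rset^{\dimdom}_+}\abs{x-y}^{-\dimdom-sp}\dif x$ by your own $I_2$ computation (the paper's \cref{lemma_potential_integral}); the factor $y_{\dimdom}^p$ combines with $y_{\dimdom}^{-sp-1}$ to produce $y_{\dimdom}^{(1-s)p-1}$ with no divergence and the exact constant $(\dimdom/s)^p$.
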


The main ingredient in the proof \cref{theorem_first_order_trace_conjunction} will be the following weighted Hardy inequality.

\begin{proposition}
\label{proposition_Hardy}
If \(\dimdom \in \Nset \setminus \set{0, 1}\), if \(s \in \intvo{0}{1}\), if \(p \in \intvo{1}{\infty}\) and if \(u \in C^1\brk{\Bar{\Rset}^\dimdom_+}\), then
\begin{equation}
\label{eq_AeKeefee6ieGhoTac5aezohr}
  \int_{\Rset^\dimdom_+} \frac{\abs{u \brk{x} - u \brk{0}}^p}{\abs{x}^{\dimdom + sp}}  \dif x
  \le \brk[\bigg]{\frac{\dimdom}{s}}^p
  \int_{\Rset^\dimdom_+}  \frac{\abs{\Deriv u\brk{x}}^p
  x_\dimdom^p}{\abs{x}^{\dimdom + sp}} \dif x.
\end{equation}
\end{proposition}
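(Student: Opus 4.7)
After subtracting $u(0)$ I may assume $u(0)=0$. The strategy I would pursue is to combine a one-dimensional Hardy inequality along radial rays with a tangential integration by parts on $\mathbb{R}^\dimdom_+$. In polar coordinates $x = r\omega$ with $r = |x|$ and $\omega \in S^{\dimdom-1}_+ \defeq S^{\dimdom-1}\cap \mathbb{R}^\dimdom_+$, the two sides of \eqref{eq_AeKeefee6ieGhoTac5aezohr} become
\[
  \int_{S^{\dimdom-1}_+}\int_0^\infty \frac{|u(r\omega)|^p}{r^{sp+1}}\dif r \dif \sigma(\omega)
  \quad\text{and}\quad
  \brk[\bigg]{\frac{\dimdom}{s}}^p\int_{S^{\dimdom-1}_+}\omega_\dimdom^p \int_0^\infty \frac{|\Deriv u(r\omega)|^p}{r^{sp-p+1}}\dif r \dif \sigma(\omega),
\]
so the angular weight $\omega_\dimdom^p$ carries the contribution of the distance to the boundary. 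For each fixed $\omega$, the classical one-dimensional Hardy inequality applied to $r\mapsto u(r\omega)$ (which vanishes at $r=0$) yields $\int_0^\infty |u(r\omega)|^p\, r^{-sp-1}\dif r \le (1/s)^p\int_0^\infty |\partial_r u(r\omega)|^p\, r^{p-sp-1}\dif r$; integrating over $\omega$ and bounding $|\partial_r u|\le |\Deriv u|$ gives the analogous inequality with $x_\dimdom^p$ replaced by $|x|^p$ and with constant $(1/s)^p$, which is a weaker statement.

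To promote this to the $x_\dimdom^p$ weight at the cost of a factor $\dimdom^p$, I would use an integration by parts with the vector field $x/|x|^{\dimdom+sp}$, whose divergence equals $-sp/|x|^{\dimdom+sp}$ and whose normal component $x_\dimdom/|x|^{\dimdom+sp}$ vanishes on $\partial \mathbb{R}^\dimdom_+$. This yields the identity
\[
  s\int_{\mathbb{R}^\dimdom_+}\frac{|u|^p}{|x|^{\dimdom+sp}}\dif x
  = \int_{\mathbb{R}^\dimdom_+}\frac{|u|^{p-2}u\,(\Deriv u\cdot x)}{|x|^{\dimdom+sp}}\dif x.
\]
I would then decompose $\Deriv u\cdot x = \Deriv u\cdot x' + x_\dimdom\,\partial_\dimdom u$. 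The normal piece can be handled directly by Hölder's inequality, producing a bound of the form $A^{(p-1)/p}B^{1/p}$ (where $A$ and $B$ denote the two integrals in \eqref{eq_AeKeefee6ieGhoTac5aezohr}), with the natural constant coming from the interior.

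For the tangential piece $\int |u|^{p-2}u\,(\Deriv u\cdot x')/|x|^{\dimdom+sp}\dif x$, I would integrate by parts again, now in the variables $x'\in \mathbb{R}^{\dimdom-1}$, using
\[
  \nabla'\cdot \brk[\bigg]{\frac{x'}{|x|^{\dimdom+sp}}}
  = \frac{\dimdom-1}{|x|^{\dimdom+sp}} - (\dimdom+sp)\frac{|x'|^2}{|x|^{\dimdom+sp+2}},
\]
and rewriting $|x'|^2 = |x|^2 - x_\dimdom^2$ to eliminate the tangential modulus in favour of $x_\dimdom$. This tangential integration by parts produces the divergence factor $\dimdom-1$; combined with the contribution $+1$ from the normal direction it yields the prefactor $\dimdom$, and recombining with the $(1/s)^p$ from the radial one-dimensional Hardy step gives the constant $(\dimdom/s)^p$.

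The main obstacle is algebraic: one must arrange the two integrations by parts and the Hölder applications so that the auxiliary integrals they generate (involving $|u|^p x_\dimdom^2/|x|^{\dimdom+sp+2}$ and higher moments) cancel exactly, leaving a closed inequality between $A$ and $B$ with the sharp constant $(\dimdom/s)^p$ and no residual transverse term with weight $|x'|^p$. Boundary issues are mild, since every vector field used carries an $x_\dimdom$ factor and therefore has vanishing flux on $\partial \mathbb{R}^\dimdom_+$, and the smoothness assumption $u\in C^1(\Bar{\Rset}^\dimdom_+)$ justifies all the integrations by parts classically.
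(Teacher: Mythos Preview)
Your proposal is not a proof but a sketch with an explicitly acknowledged gap (``The main obstacle is algebraic\ldots''), and in fact the algebra does not close along the route you describe. Writing \(A\) and \(B\) for the two sides of \eqref{eq_AeKeefee6ieGhoTac5aezohr}, your first identity \(sA = \int |u|^{p-2}u\,(\Deriv u\cdot x)/|x|^{\dimdom+sp}\) splits as \(sA = I_{\mathrm{tan}} + I_{\mathrm{norm}}\), and your tangential integration by parts gives exactly
\[
  I_{\mathrm{tan}} = \tfrac{1+sp}{p}\,A - \tfrac{\dimdom+sp}{p}\,C,
  \qquad C \defeq \int_{\Rset^{\dimdom}_+} \frac{|u|^p\, x_\dimdom^2}{|x|^{\dimdom+sp+2}}\dif x.
\]
If you now also integrate \(I_{\mathrm{norm}}\) by parts in \(x_\dimdom\) you obtain \(I_{\mathrm{norm}} = -\tfrac{1}{p}A + \tfrac{\dimdom+sp}{p}C\), and summing returns the tautology \(sA = sA\): the two steps together simply reassemble \(\operatorname{div}(x/|x|^{\dimdom+sp})\). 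If instead you keep \(I_{\mathrm{norm}}\) and estimate it by H\"older against \(A^{(p-1)/p}B^{1/p}\), you are left with \(A = (\dimdom+sp)\,C - p\,I_{\mathrm{norm}}\), and there is no mechanism to absorb the auxiliary term \(C\) (the trivial bound \(C\le A\) goes the wrong way). The heuristic ``\(\dimdom-1\) from the tangential divergence plus \(1\) from the normal direction gives \(\dimdom\)'' is only a restatement of the full divergence of \(x/|x|^{\dimdom+sp}\) and does not by itself produce the weight \(x_\dimdom^p\) on the right-hand side.

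The paper bypasses this difficulty by writing down in one stroke the vector field
\[
  \xi(x) = \frac{x_\dimdom\, e_\dimdom}{|x|^{\dimdom+sp}}
           - \Bigl(1+\tfrac{\dimdom}{sp}\Bigr)\frac{x_\dimdom^2\, x}{|x|^{\dimdom+sp+2}},
\]
engineered so that \(\operatorname{div}\xi(x) = |x|^{-(\dimdom+sp)}\) exactly, while \(|\xi(x)| \le \tfrac{\dimdom}{sp}\,x_\dimdom/|x|^{\dimdom+sp}\) and \(\xi(x)\cdot x \le 0\). A single application of the divergence theorem on a half-annulus, followed by one Young inequality, then yields \(A \le (1-\tfrac{1}{p})A + \tfrac{1}{p}(\dimdom/s)^p B\) plus boundary terms that are discarded or vanish as the radii tend to \(0\) and \(\infty\). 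The idea you are missing is precisely this choice of \(\xi\): the correction term \(-\,(1+\tfrac{\dimdom}{sp})\,x_\dimdom^2 x/|x|^{\dimdom+sp+2}\) is what builds the cancellation you were hoping would emerge, by incorporating the troublesome integral \(C\) into the vector field from the outset rather than generating it as a residual.
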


In the classical Hardy inequality, the left-hand side of \eqref{eq_AeKeefee6ieGhoTac5aezohr} is controlled as
\begin{equation}
\label{eq_aim7reifatei2booG5aubee2}
 \int_{\Rset^\dimdom_+} \frac{\abs{u \brk{x} - u \brk{0}}^p}{\abs{x}^{\dimdom + sp}}  \dif x
 \le \frac{1}{s^p}
 \int_{\Rset^{\dimdom}_+} \frac{\abs{\Deriv u\brk{x}}^p}{\abs{x}^{\dimdom - \brk{1 -  s}p}} \dif x;
\end{equation}
the integrand on the right-hand side in  \eqref{eq_AeKeefee6ieGhoTac5aezohr} is smaller than in \eqref{eq_aim7reifatei2booG5aubee2}; its stronger decay in the tangential direction will be crucial in the proof of \cref{theorem_first_order_trace_conjunction}.

\begin{proof}[Proof of \cref{proposition_Hardy}]
Defining first the vector field \(\xi \colon \Rset^{\dimdom}_+ \to \Rset^{\dimdom}\) at every point \(x = \brk{x', x_{\dimdom}} \in \Rset^{\dimdom}_+ = \Rset^{\dimdom - 1} \times \intvo{0}{\infty}\) by
\begin{equation}
\label{eq_Ue8aa5iep2ath1cee6eecei5}
  \xi \brk{x}
  \defeq
  \frac{x_{\dimdom} e_{\dimdom}}{\abs{x}^{\dimdom + sp}}
  -
  \brk[\Big]{1 + \frac{\dimdom}{sp}} \frac{x_{\dimdom}^2 x}{\abs{x}^{\dimdom + sp + 2}},
\end{equation}
where \(e_1, \dotsc, e_{\dimdom}\) is the canonical basis of \(\Rset^{\dimdom}\),
we compute for every \(x \in \Rset^{\dimdom}_+\)
\begin{equation}
\label{eq_zeejir0Giegie6thu6me1loo}
\begin{split}
 \operatorname{div} \xi \brk{x}
 &= \frac{1}{\abs{x}^{\dimdom + sp}} -
  \brk{\dimdom + sp} \frac{x_\dimdom^2}{\abs{x}^{\dimdom + sp}}\\
 &\qquad \qquad  - \brk[\Big]{1 + \frac{\dimdom}{sp}}\brk{2 + \dimdom - \brk{\dimdom + sp + 2}}
 \frac{x_{\dimdom}^2 }{\abs{x}^{\dimdom + sp + 2}}\\
 &= \frac{1}{\abs{x}^{\dimdom + sp}}.
\end{split}
\end{equation}
We also have from \eqref{eq_Ue8aa5iep2ath1cee6eecei5}
for every \(x \in \Rset^{\dimdom}_+\)
\[
 \abs{\xi \brk{x}}^2 = \frac{x_{\dimdom}^2}{\abs{x}^{2 \brk{\dimdom + sp}}}+
 \brk[\bigg]{\brk[\bigg]{\frac{\dimdom}{sp}}^2 - 1} \frac{x_{\dimdom}^4}{\abs{x}^{2 \brk{\dimdom + sp + 1}}}
 \le \brk[\bigg]{\frac{\dimdom}{sp}}^2 \frac{x_\dimdom^2}{\abs{x}^{2 \brk{\dimdom + sp}}},
\]
so that
\begin{equation}
\label{eq_MoShaiQu0mah5ootei8ohsae}
 \abs{\xi \brk{x}}
 \le \frac{\dimdom}{sp} \frac{x_\dimdom}{\abs{x}^{\dimdom + sp}},
\end{equation}
and
\begin{equation}
\label{eq_ohrootahgoo8apooGhae2ohV}
 \xi \brk{x} \cdot x = -\frac{\dimdom x_{\dimdom}^2}{sp \abs{x}^{\dimdom + sp}}.
\end{equation}

We then get for each \(R > r > 0\), by the divergence theorem
\begin{equation}
\label{eq_pothah9aeJohcei2oazohshu}
\begin{split}
  &\int_{\brk{\Bset^{\dimdom}_R\! \brk{0} \setminus \Bset^{\dimdom}_r\! \brk{0}}\cap \Rset^\dimdom_+} \abs{u \brk{x} - u \brk{0}}^p \operatorname{div} \xi \brk{x} \dif x\\
  &\qquad =   - p \int_{\brk{\Bset^{\dimdom}_R\! \brk{0} \setminus \Bset^{\dimdom}_r\! \brk{0}}\cap \Rset^{\dimdom}_+} \abs{u \brk{x} - u \brk{0}}^{p - 1} \Deriv u \brk{x} \sqb{\xi \brk{x}} \dif x \\
  &\qquad \qquad + \int_{\partial \Bset^{\dimdom}_R\! \brk{0} \cap \Rset^{\dimdom}_+} \abs{u \brk{x} - u \brk{0}}^p \frac{\xi \brk{x} \cdot x}{\abs{x}}\dif x\\
  &\qquad \qquad - \int_{\partial \Bset^{\dimdom}_r\! \brk{0} \cap \Rset^{\dimdom}_+} \abs{u \brk{x} - u \brk{0}}^p \frac{\xi \brk{x} \cdot x}{\abs{x}}\dif x.
\end{split}
\end{equation}
The first term in the right-hand side of \eqref{eq_pothah9aeJohcei2oazohshu} is controlled by Young's inequality and by \eqref{eq_MoShaiQu0mah5ootei8ohsae} as
\begin{equation}
\label{eq_Ungouchu9eeh5thuJaedeiv4}
\begin{split}
 &-p\int_{\brk{\Bset^{\dimdom}_R\! \brk{0} \setminus \Bset^{\dimdom}_r\! \brk{0}}\cap \Rset^{\dimdom}_+} \abs{u \brk{x} - u \brk{0}}^{p - 1} \Deriv u \brk{x} \sqb{\xi \brk{x}} \dif x \\
 &\quad \le \brk[\Big]{1 - \frac{1}{p}} \int_{\brk{\Bset^{\dimdom}_R\! \brk{0} \setminus \Bset^{\dimdom}_r\! \brk{0}} \cap \Rset^\dimdom_+} \frac{\abs{u \brk{x} - u \brk{0}}^p}{\abs{x}^{\dimdom + sp}}  \dif x
 + \frac{1}{p}\brk[\bigg]{\frac{\dimdom}{s}}^p \int_{\Rset^\dimdom_+}  \frac{\abs{\Deriv u\brk{x}}^p
  x_\dimdom^p}{\abs{x}^{\dimdom + sp}}
  \dif x.
\end{split}
\end{equation}
The second term in the right-hand side of \eqref{eq_pothah9aeJohcei2oazohshu} is nonnegative in view of \eqref{eq_ohrootahgoo8apooGhae2ohV}:
\begin{equation}
\label{eq_phuweingohlug4pa0eiHieyu}
 \int_{\partial \Bset^{\dimdom}_R\! \brk{0} \cap \Rset^{\dimdom}_+} \abs{u \brk{x} - u \brk{0}}^p \frac{\xi \brk{x} \cdot x}{\abs{x}}\dif x \le 0.
\end{equation}
The last term in the the right-hand side of \eqref{eq_pothah9aeJohcei2oazohshu} can be controlled thanks to \eqref{eq_ohrootahgoo8apooGhae2ohV} again as
\begin{equation}
\label{eq_Eyohnaeyu8doofahYeiHo8ph}
\begin{split}
- \int_{\partial \Bset^{\dimdom}_r\! \brk{0} \cap \Rset^{\dimdom}_+} &\abs{u \brk{x} - u \brk{0}}^p \frac{\xi \brk{x} \cdot x}{\abs{x}}\dif x\\
&=
 \frac{\dimdom}{s}\int_{\partial \Bset^{\dimdom}_r\! \brk{0} \cap \Rset^{\dimdom}_+}
  \frac{\abs{u \brk{x} - u\brk{0}}^p x_{\dimdom}^2}{\abs{x}^{\dimdom + sp + 1}} \dif x\\
&\le \frac{\dimdom}{sp} \norm{\Deriv u}_{L^\infty \brk{\Rset^\dimdom_+}}^p
\int_{\partial \Bset^{\dimdom}_\dimdom \cap \Rset^{\dimdom}_+}
\frac{1}{\abs{x}^{\dimdom - \brk{1 - s} p - 1}} \dif x\\
&\le \frac{\dimdom \abs{\partial \Bset^{\dimdom}_1}}{2 sp} \norm{\Deriv u}_{L^\infty \brk{\Rset^\dimdom_+}}^p r^{\brk{1 - s}p}.
  \end{split}
\end{equation}
It follows thus from \eqref{eq_pothah9aeJohcei2oazohshu},  \eqref{eq_Ungouchu9eeh5thuJaedeiv4}, \eqref{eq_phuweingohlug4pa0eiHieyu} and \eqref{eq_Eyohnaeyu8doofahYeiHo8ph} that
\begin{equation}
\label{eq_Airuecheu2ziiphai9fa5bei}
\begin{split}
 &\int_{\brk{\Bset^{\dimdom}_R\! \brk{0} \setminus \Bset^{\dimdom}_r\! \brk{0}}\cap \Rset^\dimdom_+} \frac{\abs{u \brk{x} - u \brk{0}}^p}{\abs{x}^{\dimdom + sp}} \dif x\\
 &\qquad \le \brk[\bigg]{\frac{\dimdom}{s}}^p \int_{\Rset^\dimdom_+}  \frac{\abs{\Deriv u\brk{x}}^p
  x_\dimdom^p}{\abs{x}^{\dimdom + sp}}
  \dif x
  + \frac{\dimdom \abs{\partial \Bset^{\dimdom}_1}}{2 s} \norm{\Deriv u}_{L^\infty \brk{\Rset^\dimdom_+}}^p r^{\brk{1 - s}p}.
  \end{split}
\end{equation}
Letting \(R \to \infty\) and \(r \to 0\) in \eqref{eq_Airuecheu2ziiphai9fa5bei} we reach the conclusion \eqref{eq_AeKeefee6ieGhoTac5aezohr}
since \(s < 1\).
\end{proof}

The constant in the estimate \eqref{eq_EMohhuduothaey8xee9Oshi1} of \cref{theorem_first_order_trace_conjunction} will also rely on the following computation.
\begin{lemma}
\label{lemma_potential_integral}
If \(\dimdom \in \Nset \setminus \set{0, 1}\), \(s \in \intvo{0}{1}\), \(p \in \intvr{1}{\infty}\) and \(x \in \Rset^{\dimdom}_+\), then
\[
 \int_{\Rset^{\dimdom - 1}} \frac{1}{\abs{x - y}^{\dimdom + sp}} \dif y
  = \frac{\pi^\frac{\dimdom -1}{2}\Gamma \brk{\frac{sp + 1}{2}}}{\Gamma \brk{\frac{\dimdom + sp}{2}} x_{\dimdom}^{sp + 1}}.
\]
\end{lemma}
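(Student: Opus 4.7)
The plan is to reduce the integral to a standard Beta-function computation through two elementary changes of variable. Writing \(x = \brk{x', x_{\dimdom}}\) with \(x' \in \Rset^{\dimdom - 1}\) and \(x_{\dimdom} \in \intvo{0}{\infty}\), one has \(\abs{x - y}^2 = \abs{x' - y}^2 + x_{\dimdom}^2\) for every \(y \in \Rset^{\dimdom - 1}\). I would first exploit translation invariance in the tangential directions to replace \(x'\) by \(0\), and then rescale by \(z = y / x_{\dimdom}\) to extract the homogeneity factor, which leaves
\[
  \int_{\Rset^{\dimdom - 1}} \frac{\dif y}{\abs{x - y}^{\dimdom + sp}}
  = \frac{1}{x_{\dimdom}^{sp + 1}}
  \int_{\Rset^{\dimdom - 1}} \frac{\dif z}{\brk{1 + \abs{z}^2}^{\frac{\dimdom + sp}{2}}}.
\]

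Next I would pass to polar coordinates in \(\Rset^{\dimdom - 1}\). The angular integration yields the surface measure \(\abs{\partial \Bset^{\dimdom - 1}_1} = 2 \pi^{\frac{\dimdom - 1}{2}}/\Gamma \brk{\frac{\dimdom - 1}{2}}\), and the remaining radial integral is
\[
  \int_0^\infty \frac{r^{\dimdom - 2}}{\brk{1 + r^2}^{\frac{\dimdom + sp}{2}}} \dif r
  = \frac{1}{2} \int_0^\infty \frac{u^{\frac{\dimdom - 3}{2}}}{\brk{1 + u}^{\frac{\dimdom + sp}{2}}} \dif u,
\]
after the substitution \(u = r^2\). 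This is a standard Beta integral, equal to \(\frac{1}{2} B \brk[\big]{\frac{\dimdom - 1}{2}, \frac{sp + 1}{2}} = \frac{1}{2} \Gamma \brk[\big]{\frac{\dimdom - 1}{2}} \Gamma \brk[\big]{\frac{sp + 1}{2}} / \Gamma \brk[\big]{\frac{\dimdom + sp}{2}}\), with both Beta parameters positive since \(\dimdom \ge 2\) and \(sp > 0\).

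Multiplying, the \(\Gamma \brk[\big]{\frac{\dimdom - 1}{2}}\) cancels with the one coming from the surface measure and one recovers the claimed value \(\pi^{\frac{\dimdom - 1}{2}} \Gamma \brk[\big]{\frac{sp + 1}{2}}/\bigl(\Gamma \brk[\big]{\frac{\dimdom + sp}{2}}\, x_{\dimdom}^{sp + 1}\bigr)\). There is essentially no obstacle in this proof; the only point to verify is the positivity of the Beta parameters (ensuring convergence at both endpoints \(0\) and \(\infty\) of the radial integral), which is automatic from the assumptions \(\dimdom \ge 2\), \(s > 0\) and \(p \ge 1\).
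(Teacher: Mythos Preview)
Your proof is correct and follows essentially the same approach as the paper: reduce to a radial integral in \(\Rset^{\dimdom-1}\) and evaluate it via the Beta function. The only cosmetic difference is that the paper performs the substitution \(r = x_{\dimdom}\sqrt{t^{-1}-1}\) directly on the radial integral (landing on \(\int_0^1 (1-t)^{\frac{\dimdom-1}{2}-1} t^{\frac{sp+1}{2}-1}\dif t\)), whereas you first scale out \(x_{\dimdom}\) and then use \(u = r^2\) to reach the \(\int_0^\infty u^{a-1}(1+u)^{-a-b}\dif u\) form of the Beta integral; both routes are equivalent.
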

\begin{proof}
Integrating in spherical coordinates and applying the change of variable \(r = x_{\dimdom} \sqrt{t^{-1} - 1}\), we get
\[
\begin{split}
  \int_{\Rset^{\dimdom - 1}} \frac{1}{\abs{x - y}^{\dimdom + sp}} \dif y
  &= \frac{2 \pi^{\frac{\dimdom - 1}{2}}}{\Gamma \brk{\frac{\dimdom  - 1}{2}}}
  \int_0^\infty \frac{r^{\dimdom - 2}}{\brk{x_{\dimdom}^2 + r^2}^\frac{\dimdom + sp}{2}} \dif r\\
  &= \frac{\pi^{\frac{\dimdom - 1}{2}}}{\Gamma \brk{\frac{\dimdom  - 1}{2}}x_{\dimdom}^{sp + 1}}
  \int_0^1 \brk{1 - t}^{\frac{\dimdom - 1}{2} - 1} t^{\frac{sp + 1}{2} - 1}\dif t\\
  & = \frac{\pi^{\frac{\dimdom - 1}{2}}\mathrm{B} \brk{\tfrac{\dimdom - 1}{2}, \tfrac{sp + 1}{2}}}{\Gamma \brk{\frac{\dimdom  - 1}{2}} x_{\dimdom}^{sp + 1}} =  \frac{\pi^{\frac{\dimdom - 1}{2}}\Gamma \brk{\frac{sp + 1}{2}}}{\Gamma \brk{\frac{\dimdom  +sp}{2}}x_{\dimdom}^{sp + 1}},
\end{split}
\]
in view of the volume formula for the unit sphere \(\partial \Bset^{\dimdom - 1}_1\) and the properties of the Beta function.
\end{proof}

We can now prove \cref{theorem_first_order_trace_conjunction}.
.
\begin{proof}[Proof of \cref{theorem_first_order_trace_conjunction}]
By density we can assume that \(u \in C^1 \brk{\Bar{\Rset}^\dimdom_+}\).
By \cref{proposition_Hardy} and \cref{lemma_potential_integral}, we have then
\[
\begin{split}
\int_{\partial \Rset^{\dimdom}_+}
\brk[\bigg]{
 \int_{\Rset^{\dimdom}_+}
    \frac{\abs{\Deriv u\brk{x}}^p}{\abs{x - y}^{\dimdom + sp}} \dif y }\dif x
    &\le \brk[\bigg]{\frac{\dimdom}{s}}^p
    \int_{\partial \Rset^{\dimdom}_+}
    \brk[\bigg]{
  \int_{\Rset^\dimdom_+}  \frac{\abs{\Deriv u\brk{y}}^p
  y_\dimdom^p}{\abs{x - y}^{\dimdom + sp}} \dif y} \dif x \\
& = \brk[\bigg]{\frac{\dimdom}{s}}^p
    \int_{\Rset^{\dimdom}_+}
    \brk[\bigg]{
  \int_{\partial \Rset^\dimdom_+}  \frac{\abs{\Deriv u\brk{y}}^p
  y_\dimdom^p}{\abs{x - y}^{\dimdom + sp}} \dif x} \dif y \\
    &= \frac{\pi^\frac{\dimdom -1}{2}\Gamma \brk{\frac{sp + 1}{2}}}{\Gamma \brk{\frac{\dimdom + sp}{2}}} \brk[\bigg]{\frac{\dimdom}{s}}^p
    \int_{\Rset^\dimdom_+} \frac{\abs{\Deriv u \brk{y}}^p}{y_{\dimdom}^{1 - \brk{1 - s}p}} \dif  y,
\end{split}
\]
which proves the announced inequality \eqref{eq_EMohhuduothaey8xee9Oshi1}.
\end{proof}

\section{Fractional trace conjunction inequality}

The fractional Sobolev spaces not only appear as traces of first-order Sobolev spaces, but also have traces on their own.
Indeed \citelist{\cite{Besov_1961}\cite{Taibleson_1964}*{th.\ 12}} (see also \cite{Leoni_2023}*{th.\ 9.14}), if \(sp > 1\), every \(u \in \sobolev^{s, p} \brk{\Rset^{\dimdom}_+}\) has  trace
\(v = \trace_{\partial \Rset^{\dimdom}_+} u
\in \sobolev^{s - 1/p, p} \brk{\partial \Rset^{\dimdom}_+}\) satisfying
\begin{equation}
\label{eq_Jeengaifai8aivaeghiphuye}
 \smashoperator[r]{\iint_{\partial \Rset^{\dimdom}_+ \times \partial \Rset^{\dimdom}_+}}
 \frac{\abs{v \brk{x} -  v \brk{y}}^p}
 {\abs{x - y}^{\dimdom - 2 + sp}}
 \dif x
 \dif y
 \le
 C \smashoperator{\iint\limits_{\Rset^{\dimdom}_+ \times \Rset^{\dimdom}_+}}
 \frac{\abs{u \brk{y} - u \brk{z}}^p}{\abs{y - z}^{\dimdom + sp}} \dif y \dif z.
\end{equation}
We prove a corresponding trace conjunction inequality.

\begin{theorem}
\label{theorem_fractional_trace_conjunction}
For every \(\dimdom \in \Nset \setminus \set{0, 1}\), \(s \in \intvo{0}{1}\) and \(p \in \intvr{1}{\infty}\) satisfying \(sp > 1\), there exists a constant \(C \in \intvo{0}{\infty}\) such every \(u \in \sobolev^{s, p} (\Rset^{\dimdom}_+)\) has a trace \(v = \trace_{\partial \Rset^{\dimdom}_+} u\) satisfying
\begin{equation}
\label{eq_ooNgohhaeBaeshohfei2aiW7}
\int_{\partial \Rset^{\dimdom}_+}
 \brk[\bigg]{\int_{\Rset^{\dimdom}_+}
 \frac{\abs{v \brk{x} - u \brk{y}}^p}
 {\abs{x - y}^{\dimdom - 1 + sp}}
 \dif y}
 \dif x
 \le C \smashoperator{\iint_{\Rset^{\dimdom}_+ \times \Rset^{\dimdom}_+}}
 \frac{\abs{u \brk{y} - u \brk{z}}^p}{\abs{y - z}^{\dimdom + sp}} \dif y \dif z.
\end{equation}
\end{theorem}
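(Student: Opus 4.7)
The plan is to decompose
\[
 v\brk{x} - u\brk{y} = \brk[\big]{v\brk{x} - v\brk{y'}} + \brk[\big]{v\brk{y'} - u\brk{y}},
\]
where $y' = \brk{y_1, \dotsc, y_{\dimdom - 1}, 0} \in \partial \Rset^{\dimdom}_+$ is the orthogonal projection on the boundary of $y = \brk{y_1, \dotsc, y_{\dimdom - 1}, y_\dimdom} \in \Rset^{\dimdom}_+$. By convexity,
\[
 \abs{v\brk{x} - u\brk{y}}^p \le 2^{p - 1} \brk[\big]{\abs{v\brk{x} - v\brk{y'}}^p + \abs{v\brk{y'} - u\brk{y}}^p},
\]
so that the trace conjunction integral on the left-hand side of \eqref{eq_ooNgohhaeBaeshohfei2aiW7} splits into a \emph{tangential} contribution and a \emph{normal} contribution, each of which I would handle by Fubini followed by a classical inequality; a density argument reduces everything to smooth $u$.

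For the tangential contribution, I would integrate out the normal variable $y_\dimdom$ first. A Beta function computation along the lines of \cref{lemma_potential_integral} gives
\[
 \int_0^\infty \frac{\dif y_\dimdom}{\brk[\big]{\abs{x - y'}^2 + y_\dimdom^2}^{\brk{\dimdom - 1 + sp}/2}} = \frac{C_1}{\abs{x - y'}^{\dimdom - 2 + sp}},
\]
which reduces this contribution to a constant multiple of the boundary Gagliardo energy $[v]_{\sobolev^{s - 1/p, p}\brk{\partial \Rset^{\dimdom}_+}}^p$ (note that $\brk{\dimdom - 1} + \brk{s - 1/p} p = \dimdom - 2 + sp$). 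The classical Besov--Taibleson trace inequality \eqref{eq_Jeengaifai8aivaeghiphuye} then bounds this by the right-hand side of \eqref{eq_ooNgohhaeBaeshohfei2aiW7}.

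For the normal contribution, I would integrate out the tangential variable $x$ first. A variant of \cref{lemma_potential_integral} with the exponent $\dimdom + sp$ replaced by $\dimdom - 1 + sp$ gives
\[
 \int_{\partial \Rset^{\dimdom}_+} \frac{\dif x}{\abs{x - y}^{\dimdom - 1 + sp}} = \frac{C_2}{y_\dimdom^{sp}},
\]
reducing this contribution to the fractional Hardy integral $C_2 \int_{\Rset^{\dimdom}_+} \abs{v\brk{y'} - u\brk{y}}^p y_\dimdom^{-sp} \dif y$. Controlling this by $[u]_{\sobolev^{s, p}\brk{\Rset^{\dimdom}_+}}^p$ is exactly the fractional Hardy inequality for functions with nonzero trace, which holds precisely when $sp > 1$.

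The main obstacle is this last step. I would prove the fractional Hardy inequality by a dyadic chaining argument: for fixed $y \in \Rset^{\dimdom}_+$, decompose $u\brk{y} - v\brk{y'} = \sum_{k \ge 0} \brk{\bar u_k - \bar u_{k + 1}}$, where $\bar u_k$ is the mean value of $u$ over a ball $B_k$ centered at $\brk{y', 2^{-k} y_\dimdom}$ of radius comparable to $2^{-k} y_\dimdom$ (lying inside $\Rset^{\dimdom}_+$), and control each $\abs{\bar u_k - \bar u_{k + 1}}^p$ by a Poincar\'e-type estimate involving the Gagliardo integrand on $B_k \times B_{k+1}$. Applying H\"older's inequality with dyadic weights $2^{-k \alpha}$ to this telescoping sum, integrating the resulting pointwise bound against $y_\dimdom^{-sp}$, and finally swapping the order of integration by Fubini would reassemble the Gagliardo seminorm of $u$ on the right-hand side. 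The hypothesis $sp > 1$ is precisely what allows to pick $\alpha \in \brk{0, sp - 1}$ so that both the H\"older weights and the remaining geometric factors are summable.
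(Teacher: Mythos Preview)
Your approach is correct but takes a genuinely different route from the paper's. You split the conjunction integral into a tangential piece controlled by the boundary Gagliardo energy and a normal piece controlled by a fractional Hardy integral, and then invoke the classical Besov--Taibleson trace inequality \eqref{eq_Jeengaifai8aivaeghiphuye} and the fractional Hardy inequality (for \(sp>1\)) as black boxes; this is essentially the decomposition of \cref{theorem_Hardy_Gagliardo_conjunction} combined with known bounds on each piece. The paper instead gives a self-contained absorption argument in the spirit of Brezis--Mironescu--Ponce: it introduces an auxiliary interior point \(z\) with \(\abs{z-x}\le\lambda\abs{x-y}\), writes a triple integral, and uses the triangle inequality so that one resulting term is the conjunction integral itself with a small coefficient \(\lambda^{sp-1}\) (this is where \(sp>1\) enters) while the other is the full Gagliardo seminorm on \(\Rset^{\dimdom}_+\times\Rset^{\dimdom}_+\); absorbing the first term into the left-hand side yields \eqref{eq_ooNgohhaeBaeshohfei2aiW7} directly.

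The trade-off is this: your argument is modular and leverages existing machinery, but it relies on \eqref{eq_Jeengaifai8aivaeghiphuye} as input, which defeats one of the paper's stated purposes---namely, to derive the classical trace inequality \emph{from} the conjunction inequality via \cref{theorem_mixed_to_gagliardo}. The paper's proof needs neither the trace theorem nor a separate Hardy inequality and thus gives an independent route to both. Your dyadic-chaining sketch for the fractional Hardy step is standard and workable, though you should note that for \(p=1\) the H\"older step is replaced by a direct summation (the telescoping series converges since \(s>1\) in that case).
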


Even though the proof of \cref{theorem_fractional_trace_conjunction} will not be based on a Hardy inequality, our proof will rely on a the strategy devised by Brezis, Mironescu and Ponce for fractional Hardy inequalities \citelist{\cite{Mironescu_2018}*{Lemma 2}\cite{Brezis_Mironescu_Ponce_2004}}.

\begin{proof}%
[Proof of \cref{theorem_fractional_trace_conjunction}]
By density,
we can assume that \(u \in C^1_c \brk{\Bar{\Rset}^{\dimdom}_+}\) so that in particular we have
\begin{equation}
\label{eq_iekai9dai6Aa9YieDitheeLu}
 \int_{\partial \Rset^{\dimdom}_+}
 \brk[\bigg]{\int_{\Rset^{\dimdom}_+}
 \frac{\abs{v \brk{x} - u \brk{y}}^p}
 {\abs{x - y}^{\dimdom - 1 + sp}}
 \dif y}
 \dif x < \infty,
\end{equation}
with \(v = u \vert_{\partial \Rset^{\dimdom}_+}\).
Defining for \(\lambda > 0\) the set
\begin{equation}
 A_\lambda
 \defeq
 \set[\big]{(x, y, z) \in \partial \Rset^{\dimdom}_+ \times \Rset^{\dimdom}_+ \times \Rset^{\dimdom}_+
 \st \abs{z - x} \le \lambda \abs{x - y}},
\end{equation}
we have by convexity and the triangle inequality
\begin{equation}
\label{eq_ahx3ieree4xoe5ChuV4IeXes}
\begin{split}
 \iiint\limits_{A_\lambda} \frac{\abs{u \brk{x} - u \brk{y}}^p}{\abs{x - y}^{2\dimdom - 1 + sp}} \dif x \dif y \dif z
 &\le
 2^{p - 1}\iiint\limits_{A_\lambda} \frac{ \abs{u \brk{x} - u \brk{z}}^p}{\abs{x - y}^{2\dimdom - 1 + sp}} \dif x \dif y \dif z \\
 &\qquad +
 2^{p - 1} \iiint\limits_{A_\lambda} \frac{\abs{u \brk{y} - u \brk{z}}^p}{\abs{x - y}^{2\dimdom - 1 + sp}} \dif x \dif y \dif z .
\end{split}
\end{equation}
For the left-hand side
of \eqref{eq_ahx3ieree4xoe5ChuV4IeXes},
 we note that for each \(x \in \partial \Rset^{\dimdom}_+\) and \(y \in \Rset^{\dimdom}_+\),
\begin{equation}
\label{eq_xeesi1ohheiphau3Xeex1Jee}
 \smashoperator[r]{\int_{\substack{z \in \Rset^{\dimdom}_+\\
 \abs{z - x} \le \lambda \abs{x - y}}}}
 \frac{1}{\abs{x - y}^{2\dimdom - 1 + sp}} \dif z
 =
 \frac{\lambda^{\dimdom}\abs{\Bset^{\dimdom}_1}}{2 \abs{x - y}^{\dimdom - 1 + sp}},
\end{equation}
whereas for the first term in the right-hand side of
\eqref{eq_ahx3ieree4xoe5ChuV4IeXes} we have
\begin{equation}
\label{eq_coh7uak5thee6iejaiP1dibe}
\begin{split}
\smash{ \smashoperator[r]{\int_{\substack{y \in \Rset^{\dimdom}_+\\
 \abs{z - x} \le \lambda \abs{x - y}}}}
 \frac{1}{\abs{x - y}^{2\dimdom - 1 + sp}} \dif y}
 &= \frac{\abs{\partial \Bset^{\dimdom}_1}}{2}
 \int_{\abs{z - x}/\lambda}^\infty \frac{1}{r^{\dimdom + sp}}  \dif r\\
 &= \frac{\dimdom \abs{\Bset^{\dimdom}_1} \lambda^{\dimdom - 1 + sp}}{2\brk{\dimdom - 1 + sp} \abs{z - x}^{\dimdom - 1 + sp}}.
\end{split}
\end{equation}
In order to treat the second term in the right-hand side of
\eqref{eq_ahx3ieree4xoe5ChuV4IeXes}, we note that if \(\abs{z - x} \le \lambda \abs{x - y}\), then by the triangle inequality
\begin{equation}
 (1 - \lambda) \abs{x - y}
 \le
 \abs{x - y} - \abs{z - x}
 \le \abs{y - z}
 \le \abs{x - y} + \abs{z - x}
 \le (1 + \lambda) \abs{x - y},
\end{equation}
and hence, if \(\lambda < 1\)
\begin{equation}
\label{eq_IFiaGuph0aelo1aiquei2eeL}
\begin{split}
 \smashoperator[r]{\int_{\substack{x \in \partial \Rset^{\dimdom}_+\\
 \abs{z - x} \le \lambda \abs{x - y}}}}
 \;
 \frac{1}{\abs{x - y}^{2\dimdom - 1 + sp}} \dif x
 &\le
 \smashoperator[r]{
 \int\limits_{\substack{x \in \partial \Rset^{\dimdom}_+\\
 \brk{1 - \lambda} \abs{x - y} \le \abs{y - z}}}}
 \;
 \frac{\brk{1 + \lambda}^{2\dimdom - 1 + sp}}{\abs{y - z}^{2\dimdom - 1 + sp}} \dif x\\
 &\le \abs{\Bset^{\dimdom - 1}_1} \frac{\brk{1 + \lambda}^{2\dimdom - 1 + sp} }{\brk{1 - \lambda}^{\dimdom - 1}\abs{y - z}^{\dimdom + sp}}.
\end{split}
\end{equation}
Inserting \eqref{eq_xeesi1ohheiphau3Xeex1Jee},
\eqref{eq_coh7uak5thee6iejaiP1dibe} and  \eqref{eq_IFiaGuph0aelo1aiquei2eeL} into \eqref{eq_ahx3ieree4xoe5ChuV4IeXes} we get, in view of \eqref{eq_iekai9dai6Aa9YieDitheeLu},
\begin{multline}
\label{eq_iughei9itheaGouyahpeitha}
\brk[\bigg]{\frac{1}{2^{p - 1}} - \frac{\dimdom \lambda^{sp - 1}}{\dimdom -1 + sp}}
\int_{\partial \Rset^{\dimdom}_+}
 \brk[\bigg]{\int_{\Rset^{\dimdom}_+}
 \frac{\abs{u \brk{x} - u \brk{y}}^p}
 {\abs{x - y}^{\dimdom - 1 + sp}}
 \dif y}
 \dif x\\
 \le \frac{2\abs{\Bset^{\dimdom - 1}_1}}{\abs{\Bset^{\dimdom}_1}} \frac{\brk{1 + \lambda}^{2\dimdom - 1 + sp} }{\lambda^{\dimdom} \brk{1 - \lambda}^{\dimdom - 1}}
 \smashoperator[r]{\iint_{\Rset^{\dimdom}_+ \times \Rset^{\dimdom}_+}
 }\frac{\abs{u \brk{y} - u \brk{z}}^p}{\abs{y - z}^{\dimdom + sp}} \dif y \dif z.
\end{multline}
Since \(sp > 1\) we can fix \(\lambda \in \intvo{0}{1}\) so that \(\dimdom 2^{p - 1}\lambda^{sp - 1} < \dimdom - 1 + sp\) and get the estimate \eqref{eq_ooNgohhaeBaeshohfei2aiW7} as a consequence of \eqref{eq_iughei9itheaGouyahpeitha}.
\end{proof}

\section{Controling the Gagliardo integral by the conjunction integral}

The conjunction integral controls the Gagliardo integral so that the estimates \cref{theorem_first_order_trace_conjunction} and \cref{theorem_fractional_trace_conjunction} imply their classical counterparts \eqref{eq_phaeW8Ualoh6kaiph3ahwaeg} and \eqref{eq_Jeengaifai8aivaeghiphuye} respectively. 

\begin{theorem}
\label{theorem_mixed_to_gagliardo}
If \(\dimdom \in \Nset \setminus \set{0, 1}\), if \(s \in \intvo{0}{1}\), if \(p \in \intvr{1}{\infty}\) and if the functions \(u \colon \Rset^{\dimdom}_+ \to \Rset\) and \( v \colon \partial \Rset^{\dimdom}_+ \to \Rset\) are measurable, then
\begin{equation}
\begin{split}
 &\smashoperator[r]{\iint_{\partial \Rset^{\dimdom}_+ \times \partial \Rset^{\dimdom}_+}}
 \frac{\abs{v \brk{x} - v (y)}^p}{\abs{x - y}^{\dimdom - 1 + sp}} \dif x \dif y\\
 &\qquad\qquad
 \le
 \frac{2 \cdot 3^{2 \dimdom -1 + sp}\Gamma \brk{\frac{\dimdom + 2}{2}}}{4^{sp} \pi^\frac{1}{2} \Gamma \brk{\frac{\dimdom + 1}{2}}} \int_{\partial \Rset^{\dimdom}_+}
 \brk[\bigg]{\int_{\Rset^{\dimdom}_+}
 \frac{\abs{v \brk{x} - u \brk{y} }^p}
 {\abs{x - y}^{\dimdom + sp}}
 \dif y}
 \dif x.
 \end{split}
\end{equation}
\end{theorem}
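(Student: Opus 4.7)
The plan is to bound $\abs{v(x) - v(y)}$ via an intermediate value $u(z)$ with $z \in \Rset^{\dimdom}_+$ averaged over a ball lifted above the segment joining $x$ and $y$, and then to swap the order of integration so as to recognise the resulting triple integral as a constant multiple of the trace conjunction integral on the right-hand side. This is an averaging-plus-Fubini argument, analogous in spirit to the classical decomposition \eqref{eq_ohghuvei8aeJoz8aipoxeich} used in the introduction to derive the Gagliardo trace inequality, but carried out with the intermediate point lifted off the boundary.

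Concretely, for distinct $x, y \in \partial \Rset^{\dimdom}_+$, I would introduce a Euclidean ball $E_{x,y} \subset \Rset^{\dimdom}_+$ of radius $r$ proportional to $\abs{x - y}$, centred at some $m_{x,y} = \tfrac{1}{2}(x+y) + \alpha \abs{x-y} e_{\dimdom}$ with $\alpha > 0$ chosen so that $E_{x,y}$ sits in the open half-space and every $z \in E_{x,y}$ satisfies two-sided bounds
\[
c \abs{x - y} \le \abs{x - z}, \abs{y - z} \le C \abs{x - y}
\]
for explicit universal constants $c, C > 0$, whence $\abs{E_{x,y}} = \abs{\Bset^{\dimdom}_1} r^{\dimdom}$. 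Convexity gives
\[
\abs{v(x) - v(y)}^p \le 2^{p-1} \bigl( \abs{v(x) - u(z)}^p + \abs{v(y) - u(z)}^p \bigr).
\]
Averaging this in $z \in E_{x,y}$, integrating in $(x, y) \in \partial \Rset^{\dimdom}_+ \times \partial \Rset^{\dimdom}_+$, and using the symmetry $x \leftrightarrow y$ to combine the two resulting terms, I would obtain
\[
\iint_{\partial \Rset^{\dimdom}_+ \times \partial \Rset^{\dimdom}_+} \frac{\abs{v(x)-v(y)}^p}{\abs{x-y}^{\dimdom-1+sp}} \dif x \dif y
\le \frac{2^p}{\abs{\Bset^{\dimdom}_1} r^{\dimdom}} \iint_{\partial \Rset^{\dimdom}_+ \times \partial \Rset^{\dimdom}_+} \int_{E_{x,y}} \frac{\abs{v(x) - u(z)}^p}{\abs{x-y}^{\dimdom - 1 + sp}} \dif z \dif x \dif y.
\]

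Applying Fubini with $y$ innermost, I would then estimate, for fixed $(x, z) \in \partial \Rset^{\dimdom}_+ \times \Rset^{\dimdom}_+$, the integral on the slice $\set{y \in \partial \Rset^{\dimdom}_+ \st z \in E_{x,y}}$. By construction, $z \in E_{x,y}$ forces $\abs{x - y}$ to be comparable to $\abs{x - z}$ from both sides, confining $y$ to an $(\dimdom - 1)$-dimensional disk of radius at most a constant multiple of $\abs{x - z}$, hence of surface measure at most a constant multiple of $\abs{\Bset^{\dimdom-1}_1} \abs{x-z}^{\dimdom-1}$. Together with the lower bound $\abs{x - y} \ge c' \abs{x - z}$ on this slice, this yields
\[
\int_{\set{y \in \partial \Rset^{\dimdom}_+ \st z \in E_{x,y}}} \frac{\dif y}{\abs{x-y}^{2\dimdom - 1 + sp}} \le \frac{C''}{\abs{x - z}^{\dimdom + sp}},
\]
and substitution produces the announced estimate.

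The main obstacle will be the bookkeeping required to reach the sharp constant in the statement. The Gamma quotient $\Gamma(\tfrac{\dimdom+2}{2})/(\pi^{1/2}\Gamma(\tfrac{\dimdom+1}{2}))$ equals $\abs{\Bset^{\dimdom-1}_1}/\abs{\Bset^{\dimdom}_1}$, which is precisely the ratio between the codimension-one measure of the $y$-slice and the volume of $E_{x,y}$; the numerical factors $3^{2\dimdom - 1 + sp}$ and $4^{-sp}$ will come from a judicious explicit choice such as $r = \abs{x-y}/4$, together with the specific two-sided constants between $\abs{x-y}$, $\abs{x-z}$, and $\abs{y-z}$ induced by the geometry of $E_{x,y}$. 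Beyond these geometric ratios, the convexity factor $2^{p-1}$, and an additional factor $2$ from the $x \leftrightarrow y$ symmetrisation, no further analytic ingredient is needed.
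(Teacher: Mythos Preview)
Your approach is essentially the paper's: it introduces the same intermediate-point averaging over the set \(\set{y \in \Rset^{\dimdom}_+ \st \abs{y - \tfrac{x+z}{2}} \le \tfrac{\abs{x-z}}{4}}\) (in its notation \(y\) is the interior variable and \(x,z\) are boundary points), applies the same convexity split, and performs the same Fubini slice estimate via the two-sided bounds \(\tfrac{1}{4}\abs{x-z} \le \abs{x-y} \le \tfrac{3}{4}\abs{x-z}\). The only cosmetic difference is that the paper does not lift the ball off the boundary but takes \(\alpha = 0\) and works with the half-ball \(E_{x,z} \cap \Rset^{\dimdom}_+\), which is precisely the choice that yields the stated constant.
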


\Cref{theorem_mixed_to_gagliardo} will follow from the more general result.

\begin{lemma}
\label{lemma_mixed_to_boundary}
If \(\dimdom \in \Nset \setminus \set{0, 1}\), if \(s \in \intvo{0}{1}\), if \(p \in \intvr{1}{\infty}\) and if the functions \(u \colon \Rset^{\dimdom}_+ \to \Rset\) and \( v, w \colon \partial \Rset^{\dimdom}_+ \to \Rset\) are measurable, then
\begin{equation}
\label{eq_eepie0aiCh9eesae8ji1Weel}
\begin{split}
 \smashoperator[r]{\iint\limits_{\partial \Rset^{\dimdom}_+ \times \partial \Rset^{\dimdom}_+}}
 \frac{\abs{v \brk{x} - w \brk{x}}^p}{\abs{x - z}^{\dimdom - 1 + sp}} \dif x \dif y
 &\le
  \frac{2 \cdot 3^{2 \dimdom -1 + sp}\Gamma \brk{\frac{\dimdom + 2}{2}}}{4^{sp} \pi^\frac{1}{2} \Gamma \brk{\frac{\dimdom + 1}{2}}}
  \brk[\Bigg]{
  \int_{\partial \Rset^{\dimdom}_+}
 \brk[\bigg]{\int_{\Rset^{\dimdom}_+}\hspace{-.5em}
 \frac{\abs{v \brk{x} - u \brk{y}}^p}
 {\abs{x - y}^{\dimdom + sp}}
 \dif y}
 \dif x
 \\
 &\hspace{4em} +
  \int_{\partial \Rset^{\dimdom}_+}
 \brk[\bigg]{\int_{\Rset^{\dimdom}_+}\hspace{-.5em}
 \frac{\abs{w \brk{z} - u \brk{y}}^p}
 {\abs{z - y}^{\dimdom + sp}}
 \dif y}
 \dif z}.
 \end{split}
\end{equation}
\end{lemma}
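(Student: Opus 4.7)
The proof strategy is to insert an interior ``bridge'' point via the triangle inequality and average over a well-chosen family of such points. For \(x, z \in \partial \Rset^{\dimdom}_+\) with \(r \defeq \abs{x - z}\), introduce an auxiliary set \(A (x, z) \subset \Rset^{\dimdom}_+\) of interior points that are uniformly comparable to both \(x\) and \(z\) and whose Lebesgue measure is comparable to \(r^{\dimdom}\). A convenient concrete choice is the ball \(A (x, z) \defeq \Bset^{\dimdom}_{r/4} (c (x, z))\) centered at \(c (x, z) \defeq \tfrac{x + z}{2} + \tfrac{r}{2} e_{\dimdom}\); its measure equals \(\abs{\Bset^{\dimdom}_1} (r/4)^{\dimdom}\), it is contained in \(\Rset^{\dimdom}_+\) (since \(y_{\dimdom} \ge r/4\) on \(A (x, z)\)), and every \(y \in A (x, z)\) satisfies \(\abs{y - x}\) and \(\abs{y - z}\) comparable to \(r\). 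A looser but sometimes more convenient substitute is \(\set{y \in \Rset^{\dimdom}_+ \st y_{\dimdom} \ge r/4,\ \abs{y - x} \le 3 r/2,\ \abs{y - z} \le 3 r/2}\).

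For each such \(y\), the elementary convexity bound
\[
 \abs{v (x) - w (z)}^p
 \le 2^{p - 1} \brk{\abs{v (x) - u (y)}^p + \abs{u (y) - w (z)}^p}
\]
holds. Averaging in \(y\) over \(A (x, z)\), dividing by \(\abs{x - z}^{\dimdom - 1 + sp}\), integrating over \((x, z) \in \partial \Rset^{\dimdom}_+ \times \partial \Rset^{\dimdom}_+\), and applying Fubini--Tonelli to exchange the \(y\) and \(z\) integrations produces two contributions. The first involves \(\abs{v (x) - u (y)}^p\) and reduces the problem to estimating, for fixed \((x, y) \in \partial \Rset^{\dimdom}_+ \times \Rset^{\dimdom}_+\), the geometric kernel
\[
 K (x, y)
 \defeq
 \int_{\set{z \in \partial \Rset^{\dimdom}_+ \st y \in A (x, z)}}
 \frac{\dif z}{\abs{A (x, z)} \cdot \abs{x - z}^{\dimdom - 1 + sp}}
\]
by \(C / \abs{x - y}^{\dimdom + sp}\) with \(C\) matching the announced constant; the second, involving \(\abs{u (y) - w (z)}^p\), is handled identically after exchanging the roles of \(x\) and \(z\), and contributes the other term on the right-hand side of \eqref{eq_eepie0aiCh9eesae8ji1Weel}.

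The main obstacle is this kernel estimate. The constraint \(y \in A (x, z)\) couples the length \(\abs{x - z}\) with the direction \((z - x)/\abs{x - z}\) through a quadratic condition, but it forces \(\abs{x - z}\) into an interval comparable to \(\abs{x - y}\). Consequently, on the domain of integration \(\abs{A (x, z)} \simeq \abs{x - y}^{\dimdom}\) and \(\abs{x - z}^{\dimdom - 1 + sp} \simeq \abs{x - y}^{\dimdom - 1 + sp}\), while the admissible \(z\) form an \((\dimdom - 1)\)-dimensional region of measure \(\lesssim \abs{x - y}^{\dimdom - 1}\); these three factors combine to yield \(K (x, y) \lesssim 1 / \abs{x - y}^{\dimdom + sp}\). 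The explicit constant assembles from powers of \(3/2\) controlling ratios like \(\abs{y - x}/\abs{x - z}\) (yielding \(3^{2 \dimdom - 1 + sp}\)), the height threshold \(y_{\dimdom} \ge r/4\) transferred to a lower bound on \(r\) (yielding \(4^{sp}\)), and the geometric ratio \(\Gamma ((\dimdom + 2)/2) / (\pi^{1/2} \Gamma ((\dimdom + 1)/2)) = \abs{\Bset^{\dimdom - 1}_1} / \abs{\Bset^{\dimdom}_1}\) that arises from reducing the \(\dimdom\)-dimensional measure of \(A (x, z)\) to an integration on the \((\dimdom - 1)\)-dimensional boundary.
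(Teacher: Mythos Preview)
Your approach is essentially the paper's: introduce an averaging set \(A(x,z)\subset\Rset^{\dimdom}_+\) of measure comparable to \(\abs{x-z}^{\dimdom}\), apply the convexity bound \(\abs{v(x)-w(z)}^p\le 2^{p-1}\brk{\abs{v(x)-u(y)}^p+\abs{u(y)-w(z)}^p}\), and Fubini to reduce to a kernel estimate \(K(x,y)\le C\abs{x-y}^{-(\dimdom+sp)}\). The only substantive difference is the choice of \(A(x,z)\): the paper takes the half-ball \(\set{y\in\Rset^{\dimdom}_+\st \abs{y-\tfrac{x+z}{2}}\le \tfrac{\abs{x-z}}{4}}\) with center on \(\partial\Rset^{\dimdom}_+\), which yields the clean two-sided bound \(\tfrac{1}{4}\abs{x-z}\le\abs{x-y}\le\tfrac{3}{4}\abs{x-z}\) and hence directly the factors \(3^{2\dimdom-1+sp}\) and \(4^{sp}\); your lifted ball \(\Bset^{\dimdom}_{r/4}\brk{\tfrac{x+z}{2}+\tfrac{r}{2}e_{\dimdom}}\) gives instead \(\abs{x-y}\in\sqb{\tfrac{r}{\sqrt{2}}-\tfrac{r}{4},\,\tfrac{r}{\sqrt{2}}+\tfrac{r}{4}}\), so your constant assembly does not actually recover the announced numerical constant---if that is required, use the boundary-centered half-ball.
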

\begin{proof}
We define the set
\[
 A
 \defeq
 \set[\Big]{\brk{x, y, z} \in \partial \Rset^{\dimdom}_+ \times \Rset^{\dimdom}_+ \times \partial \Rset^{\dimdom}_+
 \st
 \abs{y - \tfrac{x + z}{2}} \le \tfrac{\abs{z - x}}{4}}
\]
and we write, thanks to the triangle inequality,
\begin{equation}
\label{eq_ua5Vohxi9oohohng6uise6oh}
\begin{split}
 \smashoperator{\iiint_{A}} \frac{\abs{v \brk{x} - w \brk{z}}^p}{\abs{x - z}^{2\dimdom - 1 + sp}} \dif x \dif y \dif z
 &\le 2^{p - 1} \iiint\limits_{A} \frac{\abs{v \brk{x} - u \brk{y}}^p}{\abs{x - z}^{2\dimdom - 1 + sp}} \dif x \dif y \dif z\\
 &\qquad + 2^{p - 1} \iiint\limits_{A} \frac{\abs{u \brk{y} - w \brk{z}}^p}{\abs{x - z}^{2\dimdom - 1 + sp}} \dif x \dif y \dif z.
 \end{split}
\end{equation}
We first compute
\begin{equation}
\label{eq_aCiighahmee0dahh1cheenie}
 \smashoperator[r]{\int\limits_{\substack{y \in \Rset^{\dimdom}_+\\ \abs{y - \frac{x + z}{2}} \le \frac{\abs{x - z}}{4}}}} \frac{1}{\abs{x - z}^{2\dimdom - 1 + sp}} \dif y
 \ge \frac{\abs{\Bset^{\dimdom}_1}}{2\cdot 4^N \abs{x - z}^{\dimdom - 1 + sp}}.
\end{equation}
Next, we have for each \(\brk{x, y, z} \in A\),
\[
\tfrac{1}{4} \abs{x - z}
\le
\abs{\tfrac{x - z}{2}} - \abs{y - \tfrac{x + z}{2}}
\le
   \abs{x - y}
   \le
   \abs{\tfrac{x - z}{2}} + \abs{y - \tfrac{x + z}{2}}
   \le
   \tfrac{3}{4} \abs{x - z},
\]
and therefore for every \(x \in \partial \Rset^{\dimdom}_+\)
and \(y \in \Rset^{\dimdom}_+\)
\begin{equation}
\label{eq_nai0che2fe5Eethohoo4goe1}
\begin{split}
\smashoperator[r]{\int\limits_{\substack{z \in \partial \Rset^{\dimdom}_+\\ \abs{y - \frac{x + y}{2}} \le \frac{\abs{x - z}}{4}}}}\; \frac{1}{\abs{x - z}^{2\dimdom - 1 + sp}} \dif z
 &
 \le \smashoperator[r]{\int\limits_{\substack{z \in \partial \Rset^{\dimdom}_+\\ \abs{x - z} \le 4 \abs{x - y}}}} \frac{3^{2\dimdom  - 1 + sp}}{4^{2\dimdom  - 1 + sp}\abs{x - y}^{2 \dimdom - 1 + sp}} \dif z
 \\
 &\le \frac{3^{2\dimdom - 1 + sp} \abs{\Bset^{\dimdom - 1}_1}}{4^{\dimdom + sp} \abs{x - y}^{\dimdom + sp}}.
 \end{split}
\end{equation}
Similarly, we have for every \(y \in \Rset^{\dimdom}_+\) and every \(z \in \partial \Rset^{\dimdom}_+\),
\begin{equation}
\label{eq_shah3pae1eeheeFa8phiefie}
\smashoperator[r]{\int\limits_{\substack{x \in \partial \Rset^{\dimdom}_+\\ \abs{y - \frac{x + z}{2}} \le \frac{\abs{x - z}}{4}}}}\;
\frac{1}{\abs{x - z}^{2\dimdom - 1 + sp}} \dif x
 \le \frac{3^{2\dimdom - 1 + sp} \abs{\Bset^{\dimdom - 1}_1}}{4^{\dimdom + sp} \abs{y - z}^{\dimdom + s p}}.
\end{equation}
Inserting \eqref{eq_aCiighahmee0dahh1cheenie}, \eqref{eq_nai0che2fe5Eethohoo4goe1} and \eqref{eq_shah3pae1eeheeFa8phiefie} into \eqref{eq_ua5Vohxi9oohohng6uise6oh}, we get
\[
\begin{split}
  \smashoperator[r]{\iint\limits_{\partial \Rset^{\dimdom}_+ \times \partial \Rset^{\dimdom}_+}}
 \frac{\abs{v \brk{x} - w \brk{x}}^p}{\abs{x - z}^{\dimdom - 1 + sp}} \dif x \dif y
 &\le
 \frac{2 \cdot 3^{2 \dimdom -1 + sp}\abs{\Bset^{\dimdom - 1}_1}}{\abs{\Bset^{\dimdom}_1}4^{sp}} \brk[\Bigg]{
  \int_{\partial \Rset^{\dimdom}_+}
 \brk[\bigg]{\int_{\Rset^{\dimdom}_+}
 \frac{\abs{v \brk{x} - u \brk{y}}^p}
 {\abs{x - y}^{\dimdom + sp}}
 \dif y}
 \dif x
 \\
 &\hspace{4em} +
  \int_{\partial \Rset^{\dimdom}_+}
 \brk[\bigg]{\int_{\Rset^{\dimdom}_+}
 \frac{\abs{w \brk{z} - u \brk{y}}^p}
 {\abs{y - z}^{\dimdom + sp}}
 \dif y}
 \dif z},
 \end{split}
\]
we get the conclusion \eqref{eq_eepie0aiCh9eesae8ji1Weel}.
\end{proof}
\section{Characterising the trace}
The next result shows that the finitiness of the trace conjunction integral characterises the trace.

\begin{theorem}
\label{theorem_trace_characterisation}
If \(\dimdom \in \Nset \setminus \set{0, 1}\), if \(s \in \intvo{0}{1}\), if \(p \in \intvr{1}{\infty}\), if the functions \(u \colon \Rset^{\dimdom}_+ \to \Rset\) and \( v, w \colon \partial \Rset^{\dimdom}_+ \to \Rset\) are measurable and if
\[
 \int_{\partial \Rset^{\dimdom}_+}
 \brk[\bigg]{\int_{\Rset^{\dimdom}_+}
 \frac{\abs{v \brk{x} - u \brk{y}}^p}
 {\abs{x - y}^{\dimdom + sp}}
 \dif y}
 \dif x < \infty
\]
and
\[
  \int_{\partial \Rset^{\dimdom}_+}
 \brk[\bigg]{\int_{\Rset^{\dimdom}_+}
 \frac{\abs{w \brk{x} - u \brk{y}}^p}
 {\abs{x - y}^{\dimdom + sp}}
 \dif y}
 \dif x < \infty,
\]
then \(v = w\) almost everywhere in \(\partial \Rset^{\dimdom}_+\).
\end{theorem}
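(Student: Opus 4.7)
The plan is to establish $v = w$ almost everywhere by exploiting that the interior conjunction kernel $\abs{x-y}^{-(\dimdom + sp)}$ fails to be locally integrable on $\Rset^{\dimdom}_+$ near a boundary point. The decisive fact is that the exponent $\dimdom + sp$ exceeds the ambient dimension $\dimdom$ by the strictly positive quantity $sp$, given that $s \in \intvo{0}{1}$ and $p \ge 1$.

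By Fubini--Tonelli applied to the two finiteness assumptions, there exists a set $E \subseteq \partial \Rset^{\dimdom}_+$ of full $(\dimdom - 1)$-dimensional Lebesgue measure such that for every $x \in E$ both inner integrals
\[
 \int_{\Rset^{\dimdom}_+} \frac{\abs{v \brk{x} - u \brk{y}}^p}{\abs{x - y}^{\dimdom + sp}} \dif y
 \quad \text{and} \quad
 \int_{\Rset^{\dimdom}_+} \frac{\abs{w \brk{x} - u \brk{y}}^p}{\abs{x - y}^{\dimdom + sp}} \dif y
\]
are finite. For such an $x$, the convexity inequality
\[
\abs{v \brk{x} - w \brk{x}}^p \le 2^{p-1}\brk[\big]{\abs{v \brk{x} - u \brk{y}}^p + \abs{w \brk{x} - u \brk{y}}^p},
\]
after division by $\abs{x-y}^{\dimdom + sp}$ and integration in $y$ over the half-ball $\Bset^{\dimdom}_1\!\brk{x} \cap \Rset^{\dimdom}_+$, yields
\[
 \abs{v \brk{x} - w \brk{x}}^p \int_{\Bset^{\dimdom}_1\!\brk{x} \cap \Rset^{\dimdom}_+} \frac{\dif y}{\abs{x-y}^{\dimdom + sp}} < \infty.
\]

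Passing to spherical coordinates centred at $x$, the integral on the left reduces to a positive multiple of $\int_0^1 \rho^{-1 - sp} \dif \rho$, which diverges since $sp > 0$. This forces $\abs{v \brk{x} - w \brk{x}} = 0$ for every $x \in E$, whence $v = w$ almost everywhere on $\partial \Rset^{\dimdom}_+$. The argument is short and presents no serious obstacle; the only point to emphasise is that the non-integrability of the interior kernel $\abs{x-y}^{-(\dimdom + sp)}$ at a boundary point, a feature reflecting the strength of the first-order trace conjunction integral as a quantitative measure of how close $v$ is to being the trace of $u$, is what drives the pointwise identification.
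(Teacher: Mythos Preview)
Your argument is correct and considerably more direct than the paper's. The paper proceeds in two stages: first it invokes \cref{lemma_mixed_to_boundary} to deduce from the two finite conjunction integrals that
\[
\smashoperator{\iint_{\partial \Rset^{\dimdom}_+ \times \partial \Rset^{\dimdom}_+}}
\frac{\abs{v\brk{x} - w\brk{z}}^p}{\abs{x-z}^{\dimdom - 1 + sp}} \dif x \dif z < \infty,
\]
and then it applies \cref{lemma_uniqueness}, an averaging argument on the boundary, to conclude that \(v = w\) almost everywhere. Both lemmas require introducing auxiliary intermediate points and estimating triple integrals. Your route bypasses all of this by observing that, after Fubini, the two inner integrals are finite at the \emph{same} boundary point \(x\), so a single triangle inequality produces the constant \(\abs{v\brk{x} - w\brk{x}}^p\) multiplied by the divergent integral \(\int_{\Bset^{\dimdom}_1\!\brk{x}\cap \Rset^{\dimdom}_+}\abs{x-y}^{-\dimdom - sp}\dif y\). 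This is genuinely simpler for the statement at hand. The paper's longer path is not wasted, however: \cref{lemma_mixed_to_boundary} is reused to prove \cref{theorem_mixed_to_gagliardo}, and the averaging technique of \cref{lemma_uniqueness} handles the harder situation where the two boundary functions are evaluated at \emph{different} points, which your pointwise argument cannot treat.
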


The proof of \cref{theorem_trace_characterisation} will rely on \cref{lemma_mixed_to_boundary} and the next lemma.

\begin{lemma}
\label{lemma_uniqueness}
If \(\dimdom \in \Nset \setminus \set{0, 1}\), if \(s \in \intvo{0}{1}\), if \(p \in \intvr{1}{\infty}\), if the functions \( v, w \colon \partial \Rset^{\dimdom}_+ \to \Rset\) are measurable and if
\begin{equation}
\label{eq_soogh1zahr9aiL5eiphaeng4}
 \smashoperator[r]{\iint\limits_{\partial \Rset^{\dimdom}_+ \times \partial \Rset^{\dimdom}_+}}
 \frac{\abs{v\brk{x} - w \brk{y}}^p}{\abs{x - y}^{\dimdom - 1 + sp}} \dif y \dif x <\infty,
\end{equation}
then \(v = w\) almost everywhere.
\end{lemma}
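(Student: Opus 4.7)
The plan is to read the condition \eqref{eq_soogh1zahr9aiL5eiphaeng4} as saying that $w(y)$ is forced to approach $v(x)$ in $L^p$-average as $y \to x$, and then to identify both $v(x)$ and $w(x)$ with this common limiting value via Lebesgue differentiation.

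By Fubini, I would first fix $x \in \partial \Rset^{\dimdom}_+$ (in a set of full measure) such that $v(x)$ is finite and
\[
 I(x) \defeq \int_{\partial \Rset^{\dimdom}_+} \frac{\abs{v(x) - w(y)}^p}{\abs{x - y}^{\dimdom - 1 + sp}} \dif y < \infty.
\]
The crucial observation is that the kernel exponent $\dimdom - 1 + sp$ strictly exceeds the dimension $\dimdom - 1$ of the boundary. Bounding $\abs{x - y}^{-(\dimdom - 1 + sp)} \ge r^{-(\dimdom - 1 + sp)}$ on $\Bset^{\dimdom}_r(x) \cap \partial \Rset^{\dimdom}_+$ and normalising by the surface area yields
\[
 \frac{1}{\abs{\Bset^{\dimdom - 1}_r}}
 \int_{\Bset^{\dimdom}_r(x) \cap \partial \Rset^{\dimdom}_+}
 \abs{v(x) - w(y)}^p \dif y
 \le C r^{sp}
 \int_{\Bset^{\dimdom}_r(x) \cap \partial \Rset^{\dimdom}_+}
 \frac{\abs{v(x) - w(y)}^p}{\abs{x - y}^{\dimdom - 1 + sp}} \dif y,
\]
whose right-hand side vanishes as $r \to 0$ by absolute continuity of the Lebesgue integral against the integrable function appearing in $I(x)$.

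From this I would extract two conclusions. On the one hand, $w \in L^p$ on a neighbourhood of $x$; running this over a full-measure set of $x$ gives $w \in L^p_{\mathrm{loc}}(\partial \Rset^{\dimdom}_+)$. On the other hand, Jensen's inequality converts the $L^p$-average convergence into the average-value convergence $(1/\abs{\Bset^{\dimdom - 1}_r}) \int_{\Bset^{\dimdom}_r(x) \cap \partial \Rset^{\dimdom}_+} w \to v(x)$ as $r \to 0$. Invoking Lebesgue's differentiation theorem for the $L^p_{\mathrm{loc}}$ function $w$, the same averages converge to $w(x)$ at almost every $x$. Uniqueness of the limit then gives $v = w$ almost everywhere. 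There is no serious obstacle in this argument; the only subtlety is that local $L^p$-integrability of $w$, which is needed before Lebesgue differentiation applies, is not assumed but falls out of the very first estimate.
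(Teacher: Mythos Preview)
Your argument is correct, and shares with the paper the same core observation: since the kernel exponent \(\dimdom - 1 + sp\) exceeds the boundary dimension \(\dimdom - 1\), restricting to a ball of radius \(r\) and bounding \(\abs{x - y}^{-(\dimdom - 1 + sp)} \ge r^{-(\dimdom - 1 + sp)}\) produces a factor \(r^{sp}\) in front of the \(L^p\)-average, which drives it to zero. The routes then diverge. You work \emph{pointwise}: for almost every fixed \(x\) you show \(\fint_{\Bset^{\dimdom-1}_r(x)} w \to v(x)\), and then invoke the Lebesgue differentiation theorem for \(w \in L^p_{\mathrm{loc}}\) to identify this limit with \(w(x)\). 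The paper instead works \emph{globally}: it averages both functions simultaneously, setting \(v_\delta = \fint_{\Bset^{\dimdom-1}_\delta} v\) and \(w_\delta = \fint_{\Bset^{\dimdom-1}_\delta} w\), and uses Jensen together with a Fubini swap over the triple \((x,y,z)\) to obtain the integrated estimate \(\int_{\partial\Rset^{\dimdom}_+} \abs{v_\delta - w_\delta}^p \le C \delta^{sp} \iint \frac{\abs{v(x)-w(y)}^p}{\abs{x-y}^{\dimdom-1+sp}}\), from which \(v = w\) follows by letting \(\delta \to 0\). Your approach is slightly more direct and avoids the triple-integral manipulation; the paper's version is more symmetric in \(v\) and \(w\) and yields a quantitative \(L^p\) bound on \(v_\delta - w_\delta\) that could be reused.
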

\begin{proof}
By the assumption \eqref{eq_soogh1zahr9aiL5eiphaeng4} and Fubini's theorem, we have \(v, w \in L^p_{\mathrm{loc}} (\partial \Rset^{\dimdom}_+)\).
We define for \(\delta>0\) the functions \(v_\delta, w_{\delta} \colon \partial \Rset^{\dimdom}_+ \to \Rset\) for each  \(z \in \partial \Rset^{\dimdom}_+\) by
\begin{align*}
 v_\delta \brk{z} &\defeq \fint_{\Bset^{\dimdom - 1}_\delta \brk{z}} v&
 &\text{ and }&
 w_\delta \brk{z} &\defeq \fint_{\Bset^{\dimdom - 1}_\delta \brk{z}} w.
\end{align*}
We have for every \(z \in \Rset^{\dimdom}\) and \(\delta > 0\), by Jensen's inequality
\begin{equation}
\label{eq_aiph1Oov1ohng7Aithoor2sh}
\begin{split}
\abs{v_\delta \brk{z} - w_\delta \brk{z}}^p
&\le
\abs[\bigg]{\fint_{\Bset^{\dimdom - 1}_\delta \brk{z}} \fint_{\Bset^{\dimdom - 1}_\delta \brk{z}} \abs{v \brk{x} - w \brk{y}}\dif x\dif y }^p\\
&\le \fint_{\Bset^{\dimdom - 1}_\delta \brk{z}} \fint_{\Bset^{\dimdom - 1}_\delta \brk{z}} \abs{v \brk{x} - w \brk{y}}^p \dif x \dif y\\
&\le (2 \delta)^{\dimdom - 1 + sp} \fint_{\Bset^{\dimdom - 1}_\delta \brk{z}} \fint_{\Bset^{\dimdom - 1}_\delta \brk{z}} \frac{\abs{v \brk{x} - w \brk{y}}^p}{\abs{x - y}^{\dimdom - 1 + sp}} \dif x \dif y.
\end{split}
\end{equation}
and hence, integrating \eqref{eq_aiph1Oov1ohng7Aithoor2sh},
\begin{equation}
\label{eq_iegh7Gah6wiel6ho6yareiPh}
\begin{split}
 \int_{\partial \Rset^{\dimdom}_+}\abs{v_\delta - w_\delta}^p
 & \le \frac{2^{\dimdom - 1 + sp}}{\delta^{\dimdom - 1 - sp} \abs{\Bset^{\dimdom - 1}_1}^2}
 \int_{\partial \Rset^{\dimdom }_+} \smashoperator[r]{\iint_{\Bset^{\dimdom}_\delta \brk{z} \times \Bset^{\dimdom}_\delta \brk{z}}} \abs{v \brk{x} - w \brk{y}}^p \dif y \dif x \dif z \\
 & \le \frac{2^{\dimdom - 1 + sp}}{\delta^{\dimdom - 1 - sp} \abs{\Bset^{\dimdom - 1}_1}^2}
 \iint\limits_{\partial \Rset^{\dimdom}_+ \times \partial \Rset^{\dimdom}_+}
 \smashoperator[r]{\int_{\Bset^{\dimdom - 1}_\delta (\frac{x + y}{2})}} \frac{\abs{v \brk{x} - w \brk{y}}^p}{\abs{x - y}^{\dimdom - 1 + sp}} \dif z \dif y \dif x\\
 &\le \frac{2^{\dimdom - 1 + sp} \delta^{sp}}{\abs{\Bset^{\dimdom - 1}_1}}
 \smashoperator[r]{
 \iint_{\partial \Rset^{\dimdom}_+ \times \partial \Rset^{\dimdom}_+}} \frac{\abs{v \brk{x} - w \brk{y}}^p}{\abs{x - y}^{\dimdom - 1 + sp}} \dif y \dif x.
 \end{split}
\end{equation}
Therefore, we deduce from \eqref{eq_iegh7Gah6wiel6ho6yareiPh}  and \eqref{eq_soogh1zahr9aiL5eiphaeng4} that
\begin{equation}
 \int_{\Rset^{\dimdom}} \abs{v - w}^p \le \lim_{\delta \to 0} \int_{\Rset^{\dimdom}} \abs{v_\delta \brk{z} - w_\delta \brk{z}}^p = 0,
\end{equation}
which implies that \(v = w\) almost everywhere on \(\partial \Rset^{\dimdom}_+\).
\end{proof}
\begin{proof}[Proof of \cref{theorem_trace_characterisation}]
This follows from \cref{lemma_mixed_to_boundary} and \cref{lemma_uniqueness}.
\end{proof}

\section{Controlling the Hardy integral by the conjunction integral}

We now show how the conjunction inequality implies a Hardy inequality.
\begin{theorem}
\label{lemma_mixed_to_interior}
If \(\dimdom \in \Nset \setminus \set{0, 1}\), if \(s \in \intvo{0}{1}\), if \(p \in \intvr{1}{\infty}\), and if the functions \(u \colon \Rset^{\dimdom}_+ \to \Rset\) and \(v \colon \partial \Rset^\dimdom_+ \to \Rset\) are measurable, then
\begin{equation}
\label{eq_eV8Ohpheefuovaingoo4iong}
 \int_{\Rset^{\dimdom}_+}
 \frac{\abs{v \brk{x'} - u \brk{x}}^p}{x_{\dimdom}^{sp + 1}} \dif x
 \le
 C   \int_{\partial \Rset^{\dimdom}_+}
 \brk[\bigg]{\int_{\Rset^{\dimdom}_+}
 \frac{\abs{v \brk{x} - u \brk{y} }^p}
 {\abs{x - y}^{\dimdom + sp}}
 \dif x}
 \dif y,
\end{equation}
for some constant \(C\) depending only on \(\dimdom\), \(s\) and \(p\).
\end{theorem}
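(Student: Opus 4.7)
The plan is to insert an intermediate boundary value $v(z')$ via the triangle inequality, averaging $z'$ over a tangential ball of radius comparable to $x_{\dimdom}$, and then to treat the two resulting contributions separately. Concretely, for each $x = (x', x_{\dimdom}) \in \Rset^{\dimdom}_+$ and each $z' \in \Bset^{\dimdom-1}_{x_{\dimdom}}(x') \subset \partial \Rset^{\dimdom}_+$, I start from
\[
 \abs{v \brk{x'} - u \brk{x}}^p \le 2^{p-1}\abs{v \brk{x'} - v \brk{z'}}^p + 2^{p-1}\abs{v \brk{z'} - u \brk{x}}^p,
\]
average in $z'$ over $\Bset^{\dimdom-1}_{x_{\dimdom}}(x')$, multiply by $x_{\dimdom}^{-sp-1}$, and integrate in $x$. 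This splits the Hardy integral on the left of \eqref{eq_eV8Ohpheefuovaingoo4iong} into a vertical/interior piece and a tangential piece.

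For the interior piece, the key observation is that whenever $z' \in \Bset^{\dimdom-1}_{x_{\dimdom}}(x')$ one has $\abs{z' - x}^2 = \abs{z' - x'}^2 + x_{\dimdom}^2 \le 2\, x_{\dimdom}^2$. Consequently the weight appearing after averaging,
\[
 \frac{1}{x_{\dimdom}^{sp+1}} \cdot \frac{1}{\abs{\Bset^{\dimdom-1}_{x_{\dimdom}}}} = \frac{1}{\abs{\Bset^{\dimdom-1}_1}\, x_{\dimdom}^{\dimdom + sp}},
\]
is controlled from above by a multiple of $\abs{z'-x}^{-\dimdom-sp}$, so that dropping the tangential localisation on $z'$ bounds this piece directly by the conjunction integral on the right-hand side of \eqref{eq_eV8Ohpheefuovaingoo4iong}, up to an absolute constant depending only on $\dimdom$, $s$ and $p$.

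For the tangential piece, I would apply Fubini and integrate in $x_{\dimdom}$ first: for fixed $x', z' \in \partial \Rset^{\dimdom}_+$, the constraint $z' \in \Bset^{\dimdom-1}_{x_{\dimdom}}(x')$ becomes $x_{\dimdom} > \abs{x' - z'}$, and
\[
 \int_{\abs{x' - z'}}^{\infty} \frac{\dif x_{\dimdom}}{x_{\dimdom}^{\dimdom + sp}} = \frac{1}{(\dimdom - 1 + sp)\abs{x' - z'}^{\dimdom - 1 + sp}}.
\]
Thus the tangential piece reduces to a constant multiple of the Gagliardo energy of $v$ on $\partial \Rset^{\dimdom}_+$, which is in turn controlled by the conjunction integral by \cref{theorem_mixed_to_gagliardo}, closing the estimate.

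I do not foresee any serious obstacle: the argument is structurally dual to that of \cref{lemma_mixed_to_boundary}, where an interior midpoint was inserted to compare two boundary values, while here a tangential boundary point is inserted to compare a boundary value with an interior value along a vertical line. The only point requiring care is to choose the averaging radius proportional to $x_{\dimdom}$, ensuring that the vertical weight $x_{\dimdom}^{-sp-1}$, the reciprocal ball volume $x_{\dimdom}^{-(\dimdom-1)}$, and the full-space kernel $\abs{x - z'}^{-\dimdom - sp}$ all remain of the same order on the region of integration.
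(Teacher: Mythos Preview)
Your proposal is correct and follows essentially the same route as the paper: both arguments insert an intermediate boundary point constrained to a tangential ball of radius comparable to the height, split via the triangle inequality into a piece bounded directly by the conjunction integral and a piece bounded by the Gagliardo energy of \(v\), and then invoke \cref{theorem_mixed_to_gagliardo} for the latter. The only differences are cosmetic---the paper packages the averaging as an integral over a set \(A \subset \partial \Rset^{\dimdom}_+ \times \Rset^{\dimdom}_+\) with radius \(3y_{\dimdom}/4\) rather than \(x_{\dimdom}\), and starts from the double integral over \(A\) instead of from the Hardy integral---but the estimates \eqref{eq_ZiengohS8Uxahjoo0ooc0uza} and \eqref{eq_EiqueeleeghaeW1ooha9uoth} are exactly your ``interior'' and ``tangential'' computations.
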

\begin{proof}
Defining the set
\[
 A
 \defeq
 \set[\big]{\brk{x, y} = \brk{x, y', y_{\dimdom}} \in \partial \Rset^{\dimdom}_+ \times \Rset^{\dimdom}_+ \st \abs{x - y'} \le 3 y_{\dimdom}/4},
\]
we have by convexity
\begin{equation}
\label{eq_quiez1jieb2pheimoola0Quo}
\begin{split}
 \iint\limits_{A} \frac{\abs{v \brk{y'} - u \brk{y}}^p}{\abs{x - y}^{\dimdom + sp}} \dif x \dif y
 &\le 2^{p - 1} \iint\limits_{A} \frac{\abs{v \brk{y'} - v \brk{x}}^p}{\abs{x - y}^{\dimdom + sp}} \dif x \dif y\\
 &\qquad + 2^{p - 1} \iint\limits_{A} \frac{\abs{v \brk{x} - u \brk{y}}^p}{\abs{x - y}^{\dimdom + sp}} \dif x \dif y.
\end{split}
\end{equation}
For the integral in the left-hand side of \eqref{eq_quiez1jieb2pheimoola0Quo}, we note that
if \(\abs{x - y'} \le y_{\dimdom}/2\), then
\[
  \abs{x - y}^2 = \abs{x - y'}^2 + y_\dimdom^2
  \le \frac{25}{16} y_{\dimdom}^2
\]
and thus
\begin{equation}
\label{eq_ZiengohS8Uxahjoo0ooc0uza}
 \smashoperator[r]{\int\limits_{\substack{x \in \partial \Rset^{\dimdom}_+\\ \abs{x - y'} \le 3 y_{\dimdom}/4}}}
  \frac{1}{\abs{x - y}^{\dimdom + sp}} \dif x
 \ge
 \smashoperator[r]{\int\limits_{\substack{x \in \partial \Rset^{\dimdom}_+\\ \abs{x - y'} \le 3 y_{\dimdom}/4}}}\;
  \frac{1}{\brk{5 y_{\dimdom}/4}^{\dimdom + sp}} \dif x = \frac{3^{\dimdom - 1} 4^{sp+1}\abs{\Bset^{\dimdom - 1}_1}}{5^{\dimdom + sp}y_{\dimdom}^{sp + 1}},
\end{equation}
whereas for the first term in the right-hand side of \eqref{eq_quiez1jieb2pheimoola0Quo}, we have
\begin{equation}
\label{eq_EiqueeleeghaeW1ooha9uoth}
\begin{split}
\int_{4\abs{x - y'}/3}^\infty
  \frac{1}{\abs{x - y}^{\dimdom + sp}} \dif x
  &\le \int_{4\abs{x - y'}/3}^\infty
  \frac{1}{y_{\dimdom}^{\dimdom + sp}} \dif y_{\dimdom}
  \\
 &= \frac{3^{\dimdom - 1 + sp}}{\brk{\dimdom - 1 + sp}4^{\dimdom - 1 + sp}\abs{x - y'}^{\dimdom -1 + sp}}.
\end{split}
\end{equation}
Inserting \eqref{eq_ZiengohS8Uxahjoo0ooc0uza} and \eqref{eq_EiqueeleeghaeW1ooha9uoth} into \eqref{eq_quiez1jieb2pheimoola0Quo}, we get
 \begin{equation}
\label{eq_foatun4aivohl7mikeiSh6ei}
\begin{split}
 &\frac{3^{\dimdom - 1} 4^{sp + 1}\abs{\Bset^{\dimdom - 1}_1}}{5^{\dimdom + sp}} \int_{\Rset^{\dimdom}_+} \frac{\abs{v \brk{y'} - u \brk{y}}^p}{y_\dimdom^{sp + 1}} \dif x \dif y\\
 &\qquad \le 2^{p - 1} \smashoperator{\iint_{\partial \Rset^{\dimdom}_+ \times \partial \Rset^{\dimdom}_+}} \frac{\abs{v \brk{x} - v \brk{y}}^p}{\abs{x - y}^{\dimdom - 1 + sp}} \dif x \dif y\\
 &\qquad \qquad +\frac{2^{p - 1} 3^{\dimdom - 1 + sp}}{\brk{\dimdom - 1 + sp}4^{\dimdom - 1 + sp}}
 \int_{\partial \Rset^{\dimdom}_+}
 \brk[\bigg]{\int_{\Rset^{\dimdom}_+}
 \frac{\abs{v \brk{x} - u \brk{y} }^p}
 {\abs{x - y}^{\dimdom + sp}}
 \dif x}
 \dif y
 .
\end{split}
\end{equation}
Thanks to \cref{theorem_mixed_to_gagliardo}, \eqref{eq_eV8Ohpheefuovaingoo4iong} follows then from \eqref{eq_foatun4aivohl7mikeiSh6ei}.
\end{proof}

\section{From Hardy and Gagliardo to conjunction}

As a converse to \cref{lemma_mixed_to_interior} and \cref{theorem_trace_characterisation}, the conjunction integral is controlled by the Gagliardo and Hardy integrals.

\begin{theorem}
\label{theorem_Hardy_Gagliardo_conjunction}
If \(\dimdom \in \Nset \setminus \set{0, 1}\), if \(s \in \intvo{0}{1}\), if \(p \in \intvr{1}{\infty}\) and if the functions \(u \colon \Rset^{\dimdom}_+ \to \Rset\) and \( v \colon \partial \Rset^{\dimdom}_+ \to \Rset\) are measurable, then\begin{equation}
\label{eq_haweijeg4EPahnaex4iey6ae}
\begin{split}
&\int_{\partial \Rset^{\dimdom}_+}
 \brk[\bigg]{\int_{\Rset^{\dimdom}_+}
 \frac{\abs{v \brk{x} - u \brk{y} }^p}
 {\abs{x - y}^{\dimdom + sp}}
 \dif x}
 \dif y\\
 &\qquad \le
 C \brk[\bigg]{
 \smashoperator[r]{\iint_{\partial \Rset^{\dimdom}_+ \times \partial \Rset^{\dimdom}_+}} \frac{\abs{v \brk{x} - v \brk{y}}^p}{\abs{x - y}^{\dimdom - 1 + sp}} \dif x \dif y
 +
 \int_{\Rset^{\dimdom}_+} \frac{\abs{v \brk{x'} - u \brk{x}}^p}{x_\dimdom^{sp + 1}} \dif x}.
\end{split}
\end{equation}
\end{theorem}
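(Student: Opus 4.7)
The plan is to insert a boundary intermediate point into the conjunction integrand via the triangle inequality, and then perform Fubini with two elementary potential computations. For every \(x \in \partial \Rset^{\dimdom}_+\) and every \(y = \brk{y', y_{\dimdom}} \in \Rset^{\dimdom}_+\), the natural projection of \(y\) on the boundary is \(y' \in \partial \Rset^{\dimdom}_+\), and convexity of \(t \mapsto \abs{t}^p\) yields
\[
 \abs{v \brk{x} - u \brk{y}}^p
 \le
 2^{p - 1}\abs{v \brk{x} - v \brk{y'}}^p + 2^{p - 1}\abs{v \brk{y'} - u \brk{y}}^p.
\]
After dividing by \(\abs{x - y}^{\dimdom + sp}\) and integrating against \(\dif y \dif x\), the two resulting pieces will be handled separately.

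For the second piece, which only depends on the boundary projection \(y'\) of \(y\), I would apply Fubini and integrate first in \(x \in \partial \Rset^{\dimdom}_+\) at \(y\) fixed. By \cref{lemma_potential_integral} one has
\[
 \int_{\partial \Rset^{\dimdom}_+} \frac{1}{\abs{x - y}^{\dimdom + sp}} \dif x
 = \frac{\pi^\frac{\dimdom -1}{2}\Gamma \brk{\frac{sp + 1}{2}}}{\Gamma \brk{\frac{\dimdom + sp}{2}} y_{\dimdom}^{sp + 1}},
\]
so that this piece is bounded by a constant multiple of the Hardy integral on the right-hand side of \eqref{eq_haweijeg4EPahnaex4iey6ae}.

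For the first piece, I would instead integrate first in the normal variable \(y_{\dimdom} \in \intvo{0}{\infty}\), at \(x \in \partial \Rset^{\dimdom}_+\) and \(y' \in \Rset^{\dimdom - 1} \simeq \partial \Rset^{\dimdom}_+\) fixed. A direct change of variable \(y_{\dimdom} = \abs{x - y'} t\) gives
\[
 \int_0^\infty \frac{\dif y_{\dimdom}}{\brk{\abs{x - y'}^2 + y_{\dimdom}^2}^{\frac{\dimdom + sp}{2}}}
 = \frac{1}{\abs{x - y'}^{\dimdom - 1 + sp}} \int_0^{\infty} \frac{\dif t}{\brk{1 + t^2}^{\frac{\dimdom + sp}{2}}},
\]
with the last factor a finite constant depending only on \(\dimdom\), \(s\) and \(p\). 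This shows that the first piece is bounded by a constant multiple of the Gagliardo integral on the right-hand side of \eqref{eq_haweijeg4EPahnaex4iey6ae}, and combining both estimates concludes the proof.

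The argument is essentially elementary once the projection \(y \mapsto y'\) is chosen as intermediate point; the only routine subtlety is to check that the one-dimensional normal integral and the tangential potential integral yield exactly the two weights \(x_{\dimdom}^{-(sp + 1)}\) and \(\abs{x - y'}^{-(\dimdom - 1 + sp)}\) appearing in the two terms on the right-hand side of \eqref{eq_haweijeg4EPahnaex4iey6ae}.
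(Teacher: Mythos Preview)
Your proposal is correct and follows essentially the same approach as the paper: split via the triangle inequality with the boundary projection \(y'\) as intermediate point, then integrate the \(\abs{v\brk{x}-v\brk{y'}}^p\) piece in \(y_{\dimdom}\) and the \(\abs{v\brk{y'}-u\brk{y}}^p\) piece in \(x\). The only cosmetic difference is that the paper recomputes the tangential potential integral directly rather than invoking \cref{lemma_potential_integral}.
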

\begin{proof}
By convexity and the triangle inequality, we write
\begin{equation}
\label{eq_uo6Ciephoiquoong2wahfooF}
\begin{split}
 \int_{\partial \Rset^{\dimdom}_+}
 \brk[\bigg]{\int_{\Rset^{\dimdom}_+}
 \frac{\abs{v \brk{x} - u \brk{y} }^p}
 {\abs{x - y}^{\dimdom + sp}}
 \dif x}
 \dif y
&\le
 2^{p - 1}\int_{\partial \Rset^{\dimdom}_+}
 \brk[\bigg]{\int_{\Rset^{\dimdom}_+}
 \frac{\abs{v \brk{x} - v \brk{y'} }^p}
 {\abs{x - y}^{\dimdom + sp}}
 \dif x}
 \dif y\\
&\qquad
+  2^{p - 1}\int_{\partial \Rset^{\dimdom}_+}
 \brk[\bigg]{\int_{\Rset^{\dimdom}_+}
 \frac{\abs{v \brk{y'} - u \brk{y} }^p}
 {\abs{x - y}^{\dimdom + sp}}
 \dif x}
 \dif y.
\end{split}
\end{equation}
For the first term in the right-hand side of \eqref{eq_uo6Ciephoiquoong2wahfooF}, we compute
\begin{equation}
\label{eq_ju3Emoo7iegh8pah3ahquaen}
 \int_0^\infty \frac{1}{\brk{\abs{x - y'}^2 + y_{\dimdom}^2}^\frac{\dimdom + sp}{2}} \dif y_{\dimdom}
 = \frac{1}{\abs{x - y'}^{\dimdom - 1 + sp}}
 \int_0^\infty \frac{1}{\brk{1 + t^2}^\frac{\dimdom + sp}{2}} \dif t,
\end{equation}
where the integral on the right-hand side is finite if \(\dimdom + sp > 1\),
whereas for the second term, we have
\begin{equation}
\label{eq_shoon7Aekaeg5toodi2chee4}
\begin{split}
  \int_{\partial \Rset^{\dimdom}_+}
  \frac{1}{\abs{x - y}^{\dimdom + sp}} \dif x
  &= \int_{\partial \Rset^{\dimdom}_+}
  \frac{1}{\brk{\abs{x - y'}^2 + y_{\dimdom}^2}^{\frac{\dimdom + sp}{2}}} \dif x\\
  &= \frac{1}{y_\dimdom^{sp + 1}}
  \int_{\Rset^{\dimdom - 1}} \frac{1}{\brk{\abs{z}^2 + 1}^{\frac{\dimdom + sp}{2}}} \dif z,
\end{split}
\end{equation}
where the integral on the right-hand side is finite if \(sp + 1 > 0\).
Inserting \eqref{eq_ju3Emoo7iegh8pah3ahquaen} and \eqref{eq_shoon7Aekaeg5toodi2chee4} into \eqref{eq_uo6Ciephoiquoong2wahfooF}, we get the conclusion \eqref{eq_haweijeg4EPahnaex4iey6ae}.
\end{proof}

\section{Bourgain-Brezis-Mironescu formulae}

For smooth functions, the conjunction integral satisfies a counterpart of the Bourgain-Brezis-Mironescu formula \eqref{eq_emiaQuohtohseitaeph1ahP4}.

\begin{theorem}
\label{theorem_BBM_formula}
If \(\dimdom \in \Nset \setminus \set{0, 1}\), if \(p \in \intvr{1}{\infty}\) and if \(
 u \in C^1_c \brk{\Bar{\Rset}^{\dimdom}_+}
\),
then
\begin{equation}
\label{eq_vah4ahvaZohZoohaith9lah9}
 \lim_{s \underset{>}{\to} 0}\brk{1 - s}
 \int_{\partial \Rset^{\dimdom}_+} \brk[\bigg]{\int_{\Rset^{\dimdom}_+} \frac{\abs{u \brk{x} - u \brk{y}}^p}{\abs{x - y}^{\dimdom + sp}} \dif y} \dif x
 =
 \frac{\pi^{\frac{\dimdom - 1}{2}} \Gamma \brk{\frac{p + 1}{2}}}
{p \Gamma \brk{\frac{\dimdom + p}{2}}}
 \int_{\partial \Rset^{\dimdom}_+} \abs{\Deriv u}^p.
\end{equation}
\end{theorem}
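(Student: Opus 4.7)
I would mirror the classical proof of the Bourgain--Brezis--Mironescu identity \eqref{eq_zaewahnoiKung3quiePhie5u}. First, substituting $y = x + r\omega$ with $r > 0$ and $\omega$ in the upper open hemisphere $\partial \Bset^{\dimdom}_1 \cap \Rset^{\dimdom}_+$ (the only part of the sphere for which $x + r\omega \in \Rset^{\dimdom}_+$ when $x \in \partial \Rset^{\dimdom}_+$), the conjunction integral rewrites as
\[
 \int_{\partial \Rset^{\dimdom}_+}\! \int_{\Rset^{\dimdom}_+}\! \frac{\abs{u(x) - u(y)}^p}{\abs{x - y}^{\dimdom + sp}} \dif y \dif x
 = \int_{\partial \Rset^{\dimdom}_+}\! \int_{\partial \Bset^{\dimdom}_1 \cap \Rset^{\dimdom}_+}\! \int_0^{\infty} \frac{\abs{u(x + r\omega) - u(x)}^p}{r^{1 + sp}} \dif r \dif \sigma(\omega) \dif x.
\]

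Second, for each $(x, \omega) \in \partial \Rset^{\dimdom}_+ \times (\partial \Bset^{\dimdom}_1 \cap \Rset^{\dimdom}_+)$, I would establish the pointwise limit
\[
 \lim_{s \to 1^-} (1 - s) \int_0^{\infty} \frac{\abs{u(x + r\omega) - u(x)}^p}{r^{1 + sp}} \dif r = \frac{\abs{\Deriv u(x) \cdot \omega}^p}{p}.
\]
Splitting at a small $\delta > 0$, the $C^1$ expansion $u(x + r\omega) - u(x) = r\, \Deriv u(x) \cdot \omega + r\, \eta_{x, \omega}(r)$ with $\eta_{x, \omega}(r) \to 0$ turns the near part into $\delta^{(1 - s) p} / p$ times $\abs{\Deriv u(x) \cdot \omega}^p + o(1)$, which tends to the announced limit as $s \to 1^-$ and then $\delta \to 0^+$; the tail is controlled by $(1 - s)(2 \norm{u}_\infty)^p/(sp\, \delta^{sp})$, which vanishes.

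Third, dominated convergence would transfer the pointwise limit to the iterated integral. Fix $R > 0$ with $\supp u \subset \Bar{\Bset}^{\dimdom}_R$ and split the $x$-integration into the compact slab $K := \Bar{\Bset}^{\dimdom - 1}_{2R} \cap \partial \Rset^{\dimdom}_+$ and its complement. On $K$, combining the Lipschitz bound $\abs{u(x + r\omega) - u(x)} \le \norm{\Deriv u}_\infty\, r$ with the $L^\infty$ bound $\abs{u(x + r\omega) - u(x)} \le 2 \norm{u}_\infty$ (crossover at $r_0 = 2 \norm{u}_\infty / \norm{\Deriv u}_\infty$) yields, after multiplication by $(1 - s)$, a quantity uniformly bounded in $s \in \intvr{1/2}{1}$ and in $(x, \omega)$, so dominated convergence applies on the finite-measure product $K \times (\partial \Bset^{\dimdom}_1 \cap \Rset^{\dimdom}_+)$. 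For $x \in \partial \Rset^{\dimdom}_+ \setminus K$, one has $u(x) = 0$ and the inner integral reduces to $\int_{\supp u} \abs{u(y)}^p / \abs{x - y}^{\dimdom + sp} \dif y \le C \norm{u}_\infty^p / \abs{x}^{\dimdom + sp}$; integrating over this complement gives a finite quantity, and multiplying by $(1 - s)$ makes it vanish in the limit.

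Fourth, I would compute the angular average: for any $v \in \Rset^{\dimdom}$, the reflection $\omega \mapsto -\omega$ yields
\[
 \int_{\partial \Bset^{\dimdom}_1 \cap \Rset^{\dimdom}_+}\! \abs{v \cdot \omega}^p \dif \sigma(\omega)
 = \frac{1}{2} \int_{\partial \Bset^{\dimdom}_1}\! \abs{v \cdot \omega}^p \dif \sigma(\omega)
 = \frac{\pi^{\frac{\dimdom - 1}{2}} \Gamma\brk{\frac{p + 1}{2}}}{\Gamma\brk{\frac{\dimdom + p}{2}}} \abs{v}^p,
\]
the second equality being the standard Beta-function computation (of the same flavour as in the proof of \cref{lemma_potential_integral}). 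Applied with $v = \Deriv u(x)$ and integrated over $x \in \partial \Rset^{\dimdom}_+$, this delivers exactly the right-hand side of \eqref{eq_vah4ahvaZohZoohaith9lah9}. The main obstacle is the uniform tail control in the third step: since $\partial \Rset^{\dimdom}_+$ has infinite measure, a crude $L^\infty$-type bound on the integrand is not enough, and one has to combine the compact support of $u$, the algebraic decay $\abs{x - y}^{-(\dimdom + sp)}$, and the factor $(1 - s)$ to produce an admissible dominant; the remainder is a routine adaptation of the standard Bourgain--Brezis--Mironescu argument.
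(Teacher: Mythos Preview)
Your proof is correct and follows essentially the same approach as the paper's: spherical coordinates centred at the boundary point, a pointwise one-dimensional limit, dominated convergence over a compact piece of the boundary with the remaining contribution shown to vanish, and the angular average computed via \cref{lemma_KNP}. The only cosmetic differences are that the paper splits the inner integral at radius~$1$ in the $y$-variable (near/far) rather than splitting the $x$-domain into $K$ and its complement, and computes the pointwise limit via the change of variable $r = t^{1/(1-s)}$ instead of your $\delta$-splitting.
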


The constant in \eqref{eq_vah4ahvaZohZoohaith9lah9} comes from the following computation.

\begin{lemma}
\label{lemma_KNP}
If \(\dimdom \in \Nset \setminus \set{0, 1}\), if \(p \in \intvr{1}{\infty}\), then
\[
 \int_{\partial \Bset^{\dimdom}_1}
  \abs{w_1}^p  \dif w
  = \frac{2 \pi^{\frac{\dimdom - 1}{2}} \Gamma\brk{\frac{p + 1}{2}}}{\Gamma\brk{\frac{\dimdom + p}{2}}}.
\]

\end{lemma}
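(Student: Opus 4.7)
The plan is to slice the sphere $\partial \Bset^{\dimdom}_1$ along the first coordinate axis and reduce the integral to a one-dimensional Beta integral.

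First I would parametrize the sphere by setting $w_1 = \cos \theta$ with $\theta \in \intvc{0}{\pi}$; the remaining coordinates $\brk{w_2, \dotsc, w_\dimdom}$ then lie on a sphere of radius $\sin \theta$ in $\Rset^{\dimdom - 1}$, and the surface measure factorises as
\[
  \dif w = \sin^{\dimdom - 2} \theta \, \dif \theta \, \dif \sigma,
\]
where $\dif \sigma$ is the surface measure on $\partial \Bset^{\dimdom - 1}_1$. Integrating out the angular factor yields
\[
  \int_{\partial \Bset^{\dimdom}_1} \abs{w_1}^p \dif w
  = \abs{\partial \Bset^{\dimdom - 1}_1} \int_0^\pi \abs{\cos \theta}^p \sin^{\dimdom - 2} \theta \dif \theta
  = 2 \abs{\partial \Bset^{\dimdom - 1}_1} \int_0^{\pi/2} \cos^p \theta \sin^{\dimdom - 2} \theta \dif \theta.
\]

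Next I would recognise the remaining integral as a Beta function. Using the identity $\mathrm{B}\brk{a, b} = 2 \int_0^{\pi/2} \cos^{2a - 1} \theta \sin^{2b - 1} \theta \dif \theta$ with $a = \brk{p + 1}/2$ and $b = \brk{\dimdom - 1}/2$, together with $\mathrm{B}\brk{a, b} = \Gamma\brk{a} \Gamma\brk{b}/\Gamma\brk{a + b}$, I obtain
\[
  2 \int_0^{\pi/2} \cos^p \theta \sin^{\dimdom - 2} \theta \dif \theta = \frac{\Gamma\brk{\frac{p + 1}{2}} \Gamma\brk{\frac{\dimdom - 1}{2}}}{\Gamma\brk{\frac{\dimdom + p}{2}}}.
\]

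Finally I would substitute the classical formula $\abs{\partial \Bset^{\dimdom - 1}_1} = 2 \pi^{\brk{\dimdom - 1}/2}/\Gamma\brk{\brk{\dimdom - 1}/2}$; the factor $\Gamma\brk{\brk{\dimdom - 1}/2}$ cancels and delivers the announced value. There is no real obstacle here — the only thing to watch is the bookkeeping of the dimensional indices (the sliced sphere has dimension $\dimdom - 2$ but its total measure is $\abs{\partial \Bset^{\dimdom - 1}_1}$), and the degenerate case $\dimdom = 2$, where the slicing collapses to two points and $\abs{\partial \Bset^{1}_1} = 2$, which is consistent with the formula.
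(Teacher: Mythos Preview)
Your argument is correct and follows essentially the same route as the paper: slice the sphere along the first coordinate, reduce to the trigonometric integral $\int_0^{\pi/2}\cos^p\theta\,\sin^{\dimdom-2}\theta\dif\theta$, identify it as a Beta function, and combine with the formula for $\abs{\partial\Bset^{\dimdom-1}_1}$. Your extra remark on the degenerate case $\dimdom=2$ is a nice sanity check that the paper does not mention.
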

\begin{proof}
We have
\begin{equation}
\label{eq_fahWiat0wae4boogoo1iroqu}
\begin{split}
 \int_{\partial \Bset^{\dimdom}_1}
  \abs{w_1}^p \dif w
&= \abs{\partial \Bset^{\dimdom - 1}_1}
\int_{0}^{\pi} \brk{\sin \theta}^{\dimdom - 2} \abs{\cos \theta}^p\dif \theta\\
&= \abs{\partial \Bset^{\dimdom - 1}_1} 2\int_{0}^{\pi/2} \brk{\sin \theta}^{\dimdom - 2} \brk{\cos \theta}^p\dif \theta\\
&= \frac{2 \pi^{\frac{\dimdom - 1}{2}}} {\Gamma \brk{\tfrac{\dimdom - 1}{2}}} \frac{\Gamma \brk{\frac{\dimdom - 1}{2}}\Gamma\brk{\frac{p + 1}{2}}}{\Gamma\brk{\frac{N + p}{2}}}\\
&= \frac{2 \pi^{\frac{\dimdom - 1}{2}} \Gamma\brk{\frac{p + 1}{2}}}{\Gamma\brk{\frac{\dimdom + p}{2}}},
\end{split}
\end{equation}
which proves \eqref{eq_fahWiat0wae4boogoo1iroqu}.
\end{proof}

\begin{proof}[Proof of \cref{theorem_BBM_formula}]
We consider the set
\[
 K \defeq
 \set{x \in \partial \Rset^{\dimdom}_+
 \st
 \Bset^{\dimdom}_1 \brk{x} \cap \supp u \ne \emptyset}.
\]
Since \(\supp u \subseteq \Bar{\Rset}^{\dimdom}_+\) is compact, the
\(K\) itself is also compact.
We have
\[
\begin{split}
  \int_{\Bset^{\dimdom}_{1} \brk{x} \cap \Rset^{\dimdom}_+} \frac{\abs{u \brk{x} - u \brk{y}}^p}{\abs{x - y}^{\dimdom + sp}} \dif y
   & =  \int_{\Bset^{\dimdom}_{1} \cap \Rset^{\dimdom}_+} \frac{\abs{u \brk{x} - u \brk{x + h}}^p}{\abs{h}^{\dimdom + sp}} \dif h\\
&= \int_0^1 \int_{\partial \Bset^{\dimdom}_{1} \cap \Rset^{\dimdom}_+}
 \frac{\abs{u \brk{x} -  u \brk{x + rw}}^p}{r^{1+ sp}} \dif w \dif r\\
 &= \frac{1}{1 - s} \int_0^1 \int_{\partial \Bset^{\dimdom}_{1} \cap \Rset^{\dimdom}_+}
 \frac{\abs{u \brk{x} -  u \brk{x + t^{\frac{1}{1-s}} w}}^p}{t^{1 + \frac{p}{1 - s} - p}}\dif w \dif t,
\end{split}
\]
under the change of variable \(r = t^{1/\brk{1 - s}}\)
and thus
\begin{equation}
\label{eq_aiP8sae8cieRoh7ohciebei4}
\begin{split}
 &\lim_{s \underset{<}{\to} 1} \brk{1 - s}
  \int_{\Bset^{\dimdom}_{1} \brk{x} \cap \Rset^{\dimdom}_+} \frac{\abs{u \brk{x} - u \brk{y}}^p}{\abs{x - y}^{\dimdom + sp}} \dif y\\
  &\qquad = \lim_{s \underset{<}{\to} 1} \int_0^1 \int_{\partial \Bset^{\dimdom}_{1} \cap \Rset^{\dimdom}_+}
 \frac{\abs{u \brk{x} -  u \brk{x + t^{\frac{1}{1-s}} w}}^p}{t^{1 + \frac{p}{1 - s} - p}}\dif w \dif t\\
 &\qquad = \frac{1}{\brk{1 - s}p} \int_{\partial \Bset^{\dimdom}_{1} \cap \Rset^{\dimdom}_+} \abs{\Deriv u \brk{x} \sqb{w}}^p \dif w\\
 &\qquad = \frac{1}{\brk{1 - s}p}  \frac{\pi^{\frac{\dimdom - 1}{2}} \Gamma \brk{\frac{p + 1}{2}}}
{\Gamma \brk{\frac{\dimdom + p}{2}}}
 \int_{\partial \Rset^{\dimdom}_+} \abs{\Deriv u}^p,
\end{split}
\end{equation}
since, in view of \cref{lemma_KNP},
\begin{equation*}
\begin{split}
\int_{\partial \Bset^{\dimdom}_{1} \cap \Rset^{\dimdom}_+} \abs{\Deriv u \brk{x} \sqb{w}}^p \dif w
&= \abs{\Deriv u \brk{x}}^p \int_{\partial \Bset^{\dimdom}_1  \cap \Rset^{\dimdom}_+} \abs{w_1}^p \dif w\\
&= \frac{\abs{\Deriv u \brk{x}}^p}{2} \int_{\partial \Bset^{\dimdom}_1} \abs{w_1}^p \dif w\\
&= \frac{\pi^{\frac{\dimdom - 1}{2}} \Gamma \brk{\frac{p + 1}{2}}}
{\Gamma \brk{\frac{\dimdom + p}{2}}}\abs{\Deriv u \brk{x}}^p.
\end{split}
\end{equation*}
On the other hand,
\begin{equation}
\label{eq_Thai3chu5jiyi1beeb3meeTu}
\begin{split}
 \int_{\Bset^{\dimdom}_{1}\! \brk{x} \cap \Rset^{\dimdom}_+} \frac{\abs{u \brk{x} - u \brk{y}}^p}{\abs{x - y}^{\dimdom + sp}} \dif y
 &\le  \int_{\Bset^{\dimdom}_{1}\!\brk{x} \cap \Rset^{\dimdom}_+} \frac{\norm{\Deriv u}_{L^\infty\brk{\Rset^{\dimdom}_+}}^p}{\abs{x - y}^{\dimdom + \brk{1 - s}p}} \dif y\\
 &\le \C \norm{\Deriv u}_{L^\infty\brk{\Rset^{\dimdom}_+}}^p.
\end{split}
\end{equation}
In view of \eqref{eq_aiP8sae8cieRoh7ohciebei4} and \eqref{eq_Thai3chu5jiyi1beeb3meeTu}, we have by Lebesgue’s dominated convergence
\begin{equation}
\label{eq_imoohoo6oG9ieceirohyahz0}
 \lim_{s \underset{<}{\to} 1}
 \int_{\partial \Rset^{\dimdom}_+} \brk[\bigg]{\int_{\Bset^{\dimdom}_{1}\! \brk{x} \cap \Rset^{\dimdom}_+} \frac{\abs{u \brk{x} - u \brk{y}}^p}{\abs{x - y}^{\dimdom + sp}} \dif y} \dif x
= \frac{\pi^{\frac{\dimdom - 1}{2}} \Gamma \brk{\frac{p + 1}{2}}}
{p \Gamma \brk{\frac{\dimdom + p}{2}}} \int_{\partial \Rset^{\dimdom}_+} \abs{\Deriv u}^p.
\end{equation}
Finally, we also have by Lebesgue's dominated convergence theorem,
\begin{equation}
\label{eq_Uphool2mauJ0ooCuofomail6}
 \lim_{s \underset{<}{\to} 1} \brk{1 - s}
 \int_{\partial \Rset^{\dimdom}_+} \brk[\bigg]{\int_{\Rset^{\dimdom}_+ \setminus \Bset^{\dimdom}_{1} \!\brk{x}} \frac{\abs{u \brk{x} - u \brk{y}}^p}{\abs{x - y}^{\dimdom + sp}} \dif y} \dif x
 = 0,
\end{equation}
and the conclusion \eqref{eq_vah4ahvaZohZoohaith9lah9} follows then from \eqref{eq_imoohoo6oG9ieceirohyahz0} and \eqref{eq_Uphool2mauJ0ooCuofomail6}.
\end{proof}

A suitable asymptotic control on the conjunction integral gives some integrability of the gradient.

\begin{theorem}
\label{theorem_BBM_liminf}
If \(\dimdom \in \Nset \setminus \set{0, 1}\), if \(p \in \intvo{1}{\infty}\) and if the functions \(
 u \colon \Rset^{\dimdom}_+ \to \Rset\) and \(v \colon \Rset^{\dimdom}_+ \to \Rset\) are measurable and satisfy
\begin{equation}
\label{eq_usohVeh9siqui5Zahahtae4X}
  \liminf_{s \underset{<}{\to} 1}\; \brk{1 - s}
 \int_{\partial \Rset^{\dimdom}_+} \brk[\bigg]{\int_{\Rset^{\dimdom}_+} \frac{\abs{v \brk{x} - u \brk{y}}^p}{\abs{x - y}^{\dimdom + sp}} \dif y} \dif x
 < \infty,
\end{equation}
then \(v \in \sobolev^{1, p}\brk{\partial \Rset^{\dimdom}_+}\)
 and
\[
   \int_{\partial \Rset^{\dimdom}_+} \abs{\Deriv v}^p
   \le
   C
   \liminf_{s \underset{<}{\to} 1}\; \brk{1 - s}
 \int_{\partial \Rset^{\dimdom}_+} \brk[\bigg]{\int_{\Rset^{\dimdom}_+} \frac{\abs{v \brk{x} - u \brk{y}}^p}{\abs{x - y}^{\dimdom + sp}} \dif y} \dif x,
\]
for some constant \(C\) depending only on \(N\) and \(p\).
\end{theorem}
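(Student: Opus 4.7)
The strategy is to reduce the assertion to the classical Bourgain-Brezis-Mironescu characterisation of Sobolev spaces on the boundary \(\partial \Rset^{\dimdom}_+ \simeq \Rset^{\dimdom - 1}\). The key is to pass from the trace conjunction integral on the right-hand side of the hypothesis \eqref{eq_usohVeh9siqui5Zahahtae4X} to the Gagliardo seminorm of \(v\) on \(\partial \Rset^{\dimdom}_+\), which is precisely what \cref{theorem_mixed_to_gagliardo} delivers.

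First I would apply \cref{theorem_mixed_to_gagliardo} to obtain
\[
 \smashoperator[r]{\iint_{\partial \Rset^{\dimdom}_+ \times \partial \Rset^{\dimdom}_+}}
 \frac{\abs{v \brk{x} - v \brk{y}}^p}{\abs{x - y}^{\dimdom - 1 + sp}} \dif x \dif y
 \le
 C'
  \int_{\partial \Rset^{\dimdom}_+}
 \brk[\bigg]{\int_{\Rset^{\dimdom}_+}
 \frac{\abs{v \brk{x} - u \brk{y}}^p}{\abs{x - y}^{\dimdom + sp}} \dif y}
 \dif x,
\]
for some constant \(C'\) depending only on \(\dimdom\) and \(p\); indeed, the explicit prefactor produced by \cref{theorem_mixed_to_gagliardo} is uniformly bounded for \(s \in \intvo{0}{1}\). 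Multiplying this inequality by \(\brk{1 - s}\) and taking \(\liminf\) as \(s \to 1^-\), the hypothesis \eqref{eq_usohVeh9siqui5Zahahtae4X} yields
\[
 \liminf_{s \underset{<}{\to} 1}\; \brk{1 - s}
 \smashoperator[r]{\iint_{\partial \Rset^{\dimdom}_+ \times \partial \Rset^{\dimdom}_+}}
 \frac{\abs{v \brk{x} - v \brk{y}}^p}{\abs{x - y}^{\dimdom - 1 + sp}} \dif x \dif y < \infty.
\]

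I would then identify \(\partial \Rset^{\dimdom}_+\) with \(\Rset^{\dimdom - 1}\) and invoke the classical Bourgain-Brezis-Mironescu liminf characterisation for merely measurable functions recalled in the introduction, applied to \(v\) on \(\Rset^{\dimdom - 1}\). Since \(p > 1\), this immediately provides \(v \in \sobolev^{1, p} \brk{\partial \Rset^{\dimdom}_+}\) together with a bound on \(\int_{\partial \Rset^{\dimdom}_+} \abs{\Deriv v}^p\) by a universal multiple of the right-hand side of the previous display. Chaining this with the conjunction-to-Gagliardo estimate yields the announced inequality with a constant depending only on \(\dimdom\) and \(p\).

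The only technical point that requires attention is the uniform boundedness in \(s\) of the prefactor produced by \cref{theorem_mixed_to_gagliardo}, which is immediate from its explicit form; the rest is a direct chaining of two known results. Consistent with the second open problem raised in the introduction, the strategy does not recover any information about the normal component of the derivative of \(u\) suggested by the right-hand side of the identity \eqref{eq_emiaQuohtohseitaeph1ahP4}.
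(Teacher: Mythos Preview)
Your proposal is correct and follows essentially the same route as the paper: apply \cref{theorem_mixed_to_gagliardo} to bound the Gagliardo seminorm of \(v\) by the trace conjunction integral (noting that the explicit constant there stays bounded as \(s \to 1^-\)), multiply by \(\brk{1-s}\), take the \(\liminf\), and invoke the classical Bourgain--Brezis--Mironescu \(\liminf\) characterisation on \(\partial \Rset^{\dimdom}_+ \simeq \Rset^{\dimdom - 1}\). The paper's proof is exactly this two-line argument.
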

\begin{proof}
By \cref{theorem_mixed_to_gagliardo}, we have
\[
\begin{split}
&\liminf_{s \underset{<}{\to} 1}\; \brk{1 - s}
  \smashoperator{\iint\limits_{\partial \Rset^{\dimdom}_+ \times \partial \Rset^{\dimdom}_+}}
 \frac{\abs{v \brk{x} - v (y)}^p}{\abs{x - y}^{\dimdom - 1 + sp}} \dif x \dif y \\
 &\qquad \le  \frac{2 \cdot 3^{2 \dimdom -1 + p}\Gamma \brk{\frac{\dimdom + 2}{2}}}{4^{p} \pi^\frac{1}{2} \Gamma \brk{\frac{\dimdom + 1}{2}}}
 \liminf_{s \underset{<}{\to} 1}\brk{1 - s}
 \int_{\partial \Rset^{\dimdom}_+} \brk[\bigg]{\int_{\Rset^{\dimdom}_+} \frac{\abs{v \brk{x} - u \brk{y}}^p}{\abs{x - y}^{\dimdom + sp}} \dif y} \dif x < \infty;
\end{split}
\]
the conclusion then follows from the classical corresponding Bourgain-Brezis-Mironescu result \cite{Bourgain_Brezis_Mironescu_2001}.
\end{proof}

\begin{theorem}
\label{theorem_BBM_liminf_BV}
If \(\dimdom \in \Nset \setminus \set{0, 1}\) and if the functions \(
 u \colon \Rset^{\dimdom}_+ \to \Rset\) and \(v \colon \Rset^{\dimdom}_+ \to \Rset\) are measurable and satisfy
\begin{equation*}
  \liminf_{s \underset{<}{\to} 1}\;\brk{1 - s}
 \int_{\partial \Rset^{\dimdom}_+} \brk[\bigg]{\int_{\Rset^{\dimdom}_+} \frac{\abs{v \brk{x} - u \brk{y}}}{\abs{x - y}^{\dimdom + s}} \dif y} \dif x
 < \infty,
\end{equation*}
then   \(v \in BV \brk{\partial \Rset^{\dimdom}_+}\)
 and
\[
   \int_{\partial \Rset^{\dimdom}_+} \abs{\Deriv v}
   \le
   C \liminf_{s \underset{<}{\to} 1}\brk{1 - s}
 \int_{\partial \Rset^{\dimdom}_+} \brk[\bigg]{\int_{\Rset^{\dimdom}_+} \frac{\abs{v \brk{x} - u \brk{y}}}{\abs{x - y}^{\dimdom + s}} \dif y} \dif x,
\]
for some constant \(C\) depending only on \(N\).
\end{theorem}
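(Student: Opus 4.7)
The plan is to mirror the proof of \cref{theorem_BBM_liminf}, replacing the classical Bourgain-Brezis-Mironescu result for $p>1$ with its BV endpoint counterpart due to Dávila \cite{Davila_2002} (see also \cite{VanSchaftingen_Willem_2004} and \cite{Ponce_2004}). So the overall strategy is: push the conjunction integral estimate down to a Gagliardo-type seminorm on the boundary, then invoke the $p=1$ characterisation of $BV$ via the limiting behaviour of fractional seminorms.

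Concretely, first I would apply \cref{theorem_mixed_to_gagliardo} with $p=1$ (the statement there allows $p \in [1,\infty)$) to the pair $(u,v)$. This yields a constant $C_1 = C_1(\dimdom, s)$ with
\[
 \iint_{\partial \Rset^{\dimdom}_+ \times \partial \Rset^{\dimdom}_+}
 \frac{\abs{v(x) - v(y)}}{\abs{x-y}^{\dimdom - 1 + s}} \dif x \dif y
 \le C_1 \int_{\partial \Rset^{\dimdom}_+}
  \brk[\bigg]{\int_{\Rset^{\dimdom}_+}
  \frac{\abs{v(x)-u(y)}}{\abs{x-y}^{\dimdom+s}}\dif y}\dif x.
\]
Inspection of the proof of \cref{theorem_mixed_to_gagliardo} shows that $C_1$ stays bounded as $s \to 1^-$, so multiplying by $(1-s)$ and taking the liminf I would obtain
\[
 \liminf_{s \underset{<}{\to} 1}\; (1 - s) \iint_{\partial \Rset^{\dimdom}_+ \times \partial \Rset^{\dimdom}_+}
 \frac{\abs{v(x) - v(y)}}{\abs{x-y}^{\dimdom - 1 + s}} \dif x \dif y < \infty,
\]
with a constant depending only on $\dimdom$.

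Second, the Dávila-Ponce-Van Schaftingen-Willem theorem applied on the $(\dimdom-1)$-dimensional space $\partial \Rset^{\dimdom}_+$ states that whenever a measurable $v$ has a finite liminf of $(1-s)$ times its $W^{s,1}$-Gagliardo seminorm as $s \to 1^-$, then $v \in BV(\partial \Rset^{\dimdom}_+)$ and $\int_{\partial \Rset^{\dimdom}_+}\abs{\Deriv v}$ is controlled (up to a dimensional constant) by this liminf. Chaining the two inequalities gives the announced bound.

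I expect no serious obstacle: the only point requiring a small amount of care is that the BV endpoint of BBM is a \emph{liminf} statement, not just a limit statement, so that it applies verbatim to the hypothesis at hand; and that the constant produced by \cref{theorem_mixed_to_gagliardo} must be uniformly bounded in $s$ near $1$, which is visible from the explicit expression $\frac{2\cdot 3^{2\dimdom - 1 + s}\Gamma(\frac{\dimdom+2}{2})}{4^{s}\pi^{1/2}\Gamma(\frac{\dimdom+1}{2})}$. Both are straightforward to verify.
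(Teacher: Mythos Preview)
Your proposal is correct and follows essentially the same approach as the paper: the paper's proof likewise appeals to \cref{theorem_mixed_to_gagliardo} to bound the boundary Gagliardo seminorm by the conjunction integral, and then invokes the $BV$ endpoint of the Bourgain--Brezis--Mironescu theory \cite{Davila_2002,VanSchaftingen_Willem_2004}. Your additional remark that the explicit constant in \cref{theorem_mixed_to_gagliardo} remains bounded as $s\to 1^-$ makes the argument slightly more explicit than the paper's terse statement.
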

\begin{proof}
This follows from \cref{theorem_mixed_to_gagliardo}
and the corresponding results for the Gagliardo integral \citelist{\cite{Davila_2002}\cite{VanSchaftingen_Willem_2004}}.
\end{proof}

\begin{bibdiv}
\begin{biblist}

\bib{Aronszajn_1955}{article}{
  author = {Aronszajn, N.},
  title = {Boundary values of functions with finite Dirichlet integral},
  date = {1955},
  pages = {77-93},
  conference={
        title={Conference on partial differential equations},
        address={{Univ}. {Kansas}},
        date={Summer 1954},
    },
}

\bib{Besov_1961}{article}{
   author= = {Besov, O. V.},
   title = {Investigation of a family of function spaces in connections with theorems of imbedding and extension},
   journal={Trudy Mat. Inst. Steklov.},
   volume={60},
   pages={42-81},
   date={1961},
   language={Russian},
   translation={
      journal={Amer. Math. Soc. Transl. (2)},
      volume={40},
      pages={85--126},
      date={1964},
   }
}

\bib{Bourgain_Brezis_Mironescu_2001}{article}{
   author={Bourgain, Jean},
   author={Brezis, Haim},
   author={Mironescu, Petru},
   title={Another look at Sobolev spaces},
   conference={
      title={Optimal control and partial differential equations},
   },
   book={
      publisher={IOS, Amsterdam},
   },
   isbn={1-58603-096-5},
   date={2001},
   pages={439--455},
}

\bib{Bourgain_Nguyen_2006}{article}{
   author={Bourgain, Jean},
   author={Nguyen, Hoai-Minh},
   title={A new characterization of Sobolev spaces},
   journal={C. R. Math. Acad. Sci. Paris},
   volume={343},
   date={2006},
   number={2},
   pages={75--80},
   issn={1631-073X},
   doi={10.1016/j.crma.2006.05.021},
}

\bib{Brezis_2002}{article}{
   author={Brezis, H.},
   title={How to recognize constant functions. A connection with Sobolev
   spaces},
   language={Russian},
   journal={Uspekhi Mat. Nauk},
   volume={57},
   date={2002},
   number={4(346)},
   pages={59--74},
   issn={0042-1316},
   translation={
      journal={Russian Math. Surveys},
      volume={57},
      date={2002},
      number={4},
      pages={693--708},
      issn={0036-0279},
   },
   doi={10.4213/rm533},
}

\bib{Brezis_2011}{book}{
   author={Brezis, Haim},
   title={Functional analysis, Sobolev spaces and partial differential
   equations},
   series={Universitext},
   publisher={Springer, New York},
   date={2011},
   pages={xiv+599},
   isbn={978-0-387-70913-0},
doi={10.1007/978-0-387-70914-7},
}

\bib{Brezis_Mironescu_2021}{book}{
   author={Brezis, Ha\"im},
   author={Mironescu, Petru},
   title={Sobolev maps to the circle---from the perspective of analysis,
   geometry, and topology},
   series={Progress in Nonlinear Differential Equations and their
   Applications},
   volume={96},
   publisher={Birkh\"auser/Springer},
   address={New York},
   date={2021},
   pages={xxxi+530},
   isbn={978-1-0716-1510-2},
   isbn={978-1-0716-1512-6},
   doi={10.1007/978-1-0716-1512-6},
}

\bib{Brezis_Mironescu_Ponce_2004}{article}{
   author={Brezis, H.},
   author={Mironescu, P.},
   author={Ponce, A. C.},
   title={Complements to the paper “\(W^{1, 1}\)-maps with values into \(S^1\)},
   date={2004},
   eprint={https://hal.science/hal-00747667v1},
}

\bib{Brezis_Nguyen_2018}{article}{
   author={Brezis, Ha\"im},
   author={Nguyen, Hoai-Minh},
   title={Non-local functionals related to the total variation and
   connections with image processing},
   journal={Ann. PDE},
   volume={4},
   date={2018},
   number={1},
   pages={Paper No. 9, 77},
   issn={2524-5317},
   doi={10.1007/s40818-018-0044-1},
}

\bib{Brezis_Nirenberg_1996}{article}{
   author={Brezis, Ha\"im},
   author={Nirenberg, Louis},
   title={Degree theory and BMO. II. Compact manifolds with boundaries},
   journal={Selecta Math. (N.S.)},
   volume={2},
   date={1996},
   number={3},
   pages={309--368},
   issn={1022-1824},
   doi={10.1007/BF01587948},
}

\bib{Brezis_Seeger_VanSchaftingen_Yung_2022}{article}{
   author={Brezis, Ha\"im},
   author={Seeger, Andreas},
   author={Van Schaftingen, Jean},
   author={Yung, Po-Lam},
   title={Sobolev spaces revisited},
   journal={Atti Accad. Naz. Lincei Rend. Lincei Mat. Appl.},
   volume={33},
   date={2022},
   number={2},
   pages={413--437},
   issn={1120-6330},
   doi={10.4171/rlm/976},
}
\bib{Brezis_Seeger_VanSchaftingen_Yung_2024}{article}{
   author={Brezis, Ha\"im},
   author={Seeger, Andreas},
   author={Van Schaftingen, Jean},
   author={Yung, Po-Lam},
   title={Families of functionals representing Sobolev norms},
   journal={Anal. PDE},
   volume={17},
   date={2024},
   number={3},
   pages={943--979},
   issn={2157-5045},
   doi={10.2140/apde.2024.17.943},
}

\bib{Brezis_VanSchaftingen_Yung_2021_PNAS}{article}{
   author={Brezis, Ha\"im},
   author={Van Schaftingen, Jean},
   author={Yung, Po-Lam},
   title={A surprising formula for Sobolev norms},
   journal={Proc. Natl. Acad. Sci. USA},
   volume={118},
   date={2021},
   number={8},
   pages={Paper No. e2025254118, 6},
   issn={0027-8424},
   doi={10.1073/pnas.2025254118},
}
\bib{Brezis_VanSchaftingen_Yung_2021_CVPDE}{article}{
   author={Brezis, Ha\"im},
   author={Van Schaftingen, Jean},
   author={Yung, Po-Lam},
   title={Going to Lorentz when fractional Sobolev, Gagliardo and Nirenberg
   estimates fail},
   journal={Calc. Var. Partial Differential Equations},
   volume={60},
   date={2021},
   number={4},
   pages={Paper No. 129, 12},
   issn={0944-2669},
   doi={10.1007/s00526-021-02001-w},
}

\bib{Davies_1998}{article}{
   author={Davies, E. B.},
   title={A review of Hardy inequalities},
   conference={
      title={The Maz\cprime ya anniversary collection, Vol. 2},
      address={Rostock},
      date={1998},
   },
   book={
      series={Oper. Theory Adv. Appl.},
      volume={110},
      publisher={Birkh\"auser, Basel},
   },
   isbn={3-7643-6202-2},
   date={1999},
   pages={55--67},
}

\bib{Davila_2002}{article}{
   author={D\'avila, J.},
   title={On an open question about functions of bounded variation},
   journal={Calc. Var. Partial Differential Equations},
   volume={15},
   date={2002},
   number={4},
   pages={519--527},
   issn={0944-2669},
   doi={10.1007/s005260100135},
}

\bib{Dominguez_Seeger_Street_VanSchaftingen_Yung_2023}{article}{
   author={Dom\'inguez, \'Oscar},
   author={Seeger, Andreas},
   author={Street, Brian},
   author={Van Schaftingen, Jean},
   author={Yung, Po-Lam},
   title={Spaces of Besov-Sobolev type and a problem on nonlinear
   approximation},
   journal={J. Funct. Anal.},
   volume={284},
   date={2023},
   number={4},
   pages={Paper No. 109775, 50},
   issn={0022-1236},
   doi={10.1016/j.jfa.2022.109775},
}

\bib{Gagliardo_1957}{article}{
   author={Gagliardo, Emilio},
   title={Caratterizzazioni delle tracce sulla frontiera relative ad alcune
   classi di funzioni in $n$ variabili},
   journal={Rend. Sem. Mat. Univ. Padova},
   volume={27},
   date={1957},
   pages={284--305},
   issn={0041-8994},
}

\bib{Hardy_1920}{article}{
   author={Hardy, G. H.},
   title={Note on a theorem of Hilbert},
   journal={Math. Z.},
   volume={6},
   date={1920},
   number={3-4},
   pages={314--317},
   issn={0025-5874},
   doi={10.1007/BF01199965},
}

\bib{Hardy_Littlewood_Polya}{book}{
   author={Hardy, G. H.},
   author={Littlewood, J. E.},
   author={P\'{o}lya, G.},
   title={Inequalities},
   edition={2},
   publisher={Cambridge, at the University Press},
   date={1952},
   pages={xii+324},
}

\bib{Kufner_Maligranda_Persson_2006}{article}{
   author={Kufner, Alois},
   author={Maligranda, Lech},
   author={Persson, Lars-Erik},
   title={The prehistory of the Hardy inequality},
   journal={Amer. Math. Monthly},
   volume={113},
   date={2006},
   number={8},
   pages={715--732},
   issn={0002-9890},
   doi={10.2307/27642033},
}

\bib{Kufner_Maligranda_Persson_2007}{book}{
   author={Kufner, Alois},
   author={Maligranda, Lech},
   author={Persson, Lars-Erik},
   title={The Hardy inequality},
   subtitle={About its history and some related results},
   publisher={Vydavatelsk\'y{} Servis, Plze\v n},
   date={2007},
   pages={162},
   isbn={978-80-86843-15-5},
}

\bib{Leoni_2023}{book}{
   author={Leoni, Giovanni},
   title={A first course in fractional Sobolev spaces},
   series={Graduate Studies in Mathematics},
   volume={229},
   publisher={American Mathematical Society, Providence, RI},
   date={2023},
   pages={xv+586},
   isbn={[9781470468989]},
   isbn={[9781470472535]},
   isbn={[9781470472528]},
   doi={10.1090/gsm/229},
}

\bib{Mazowiecka_VanSchaftingen_2023}{article}{
   author={Mazowiecka, Katarzyna},
   author={Van Schaftingen, Jean},
   title={Quantitative characterization of traces of Sobolev maps},
   journal={Commun. Contemp. Math.},
   volume={25},
   date={2023},
   number={2},
   pages={Paper No. 2250003, 31},
   issn={0219-1997},
   doi={10.1142/S0219199722500031},
}

\bib{Mazya_Shaposhnikova_2002}{article}{
   author={Maz\cprime ya, V.},
   author={Shaposhnikova, T.},
   title={On the Bourgain, Brezis, and Mironescu theorem concerning limiting
   embeddings of fractional Sobolev spaces},
   journal={J. Funct. Anal.},
   volume={195},
   date={2002},
   number={2},
   pages={230--238},
   issn={0022-1236},
   doi={10.1006/jfan.2002.3955},
}

\bib{Mironescu_2015}{article}{
   author={Mironescu, Petru},
   title={Note on Gagliardo's theorem ``$\trace W^{1,1}=L^1$''},
   journal={Ann. Univ. Buchar. Math. Ser.},
   volume={6(LXIV)},
   date={2015},
   number={1},
   pages={99--103},
   issn={2067-9009},
   doi={10.1515/crll.2003.036},
}

\bib{Mironescu_2018}{article}{
   author={Mironescu, Petru},
   title={The role of the Hardy type inequalities in the theory of function
   spaces},
   journal={Rev. Roumaine Math. Pures Appl.},
   volume={63},
   date={2018},
   number={4},
   pages={447--525},
   issn={0035-3965},
}

\bib{Mironescu_Russ_2015}{article}{
   author={Mironescu, Petru},
   author={Russ, Emmanuel},
   title={Traces of weighted Sobolev spaces. Old and new},
   journal={Nonlinear Anal.},
   volume={119},
   date={2015},
   pages={354--381},
   issn={0362-546X},
   doi={10.1016/j.na.2014.10.027},
}

\bib{Nguyen_2006}{article}{
   author={Nguyen, Hoai-Minh},
   title={Some new characterizations of Sobolev spaces},
   journal={J. Funct. Anal.},
   volume={237},
   date={2006},
   number={2},
   pages={689--720},
   issn={0022-1236},
   doi={10.1016/j.jfa.2006.04.001},
}

\bib{Nguyen_2008}{article}{
   author={Nguyen, Hoai-Minh},
   title={Further characterizations of Sobolev spaces},
   journal={J. Eur. Math. Soc. (JEMS)},
   volume={10},
   date={2008},
   number={1},
   pages={191--229},
   issn={1435-9855},
doi={10.4171/JEMS/108},
}

\bib{Peetre_1979}{article}{
   author={Peetre, Jaak},
   title={A counterexample connected with Gagliardo's trace theorem},
   journal={Comment. Math. Spec. Issue},
   volume={2},
   date={1979},
   pages={277--282},
}

\bib{Pelczynski_Wojciechowski_2002}{article}{
   author={Pe\l czy\'nski, Aleksander},
   author={Wojciechowski, Micha\l},
   title={Sobolev spaces in several variables in $L^1$-type norms are not
   isomorphic to Banach lattices},
   journal={Ark. Mat.},
   volume={40},
   date={2002},
   number={2},
   pages={363--382},
   issn={0004-2080},
   doi={10.1007/BF02384541},
}

\bib{Ponce_2004}{article}{
   author={Ponce, Augusto C.},
   title={A new approach to Sobolev spaces and connections to
   $\Gamma$-convergence},
   journal={Calc. Var. Partial Differential Equations},
   volume={19},
   date={2004},
   number={3},
   pages={229--255},
   issn={0944-2669},
   doi={10.1007/s00526-003-0195-z},
}

\bib{Prodi_1957}{article}{
   author={Prodi, Giovanni},
   title={Tracce sulla frontiera delle funzioni di Beppo Levi},
   journal={Rend. Sem. Mat. Univ. Padova},
   volume={26},
   date={1956},
   pages={36--60},
   issn={0041-8994},
}

\bib{Slobodecki_Babich_1956}{article}{
   author={Slobodeckiĭ, L. N.},
   author={Babič, V. M.},
   title={On boundedness of the Dirichlet integrals},
   language={Russian},
   journal={Dokl. Akad. Nauk SSSR (N.S.)},
   volume={106},
   date={1956},
   pages={604--606},
}

\bib{Stampacchia_1952}{article}{
   author={Stampacchia, Guido},
   title={Problemi al contorno per equazioni di tipo ellittico a derivate
   parziali e questioni di calcolo delle variazioni connesse},
   journal={Ann. Mat. Pura Appl. (4)},
   volume={33},
   date={1952},
   pages={211--238},
   issn={0003-4622},
   doi={10.1007/BF02418184},
}

\bib{Taibleson_1964}{article}{
   author={Taibleson, Mitchell H.},
   title={On the theory of Lipschitz spaces of distributions on Euclidean
   $n$-space. I. Principal properties},
   journal={J. Math. Mech.},
   volume={13},
   date={1964},
   pages={407--479},
}

\bib{Uspenskii_1961}{article}{
   author={Uspenskiĭ, S. V.},
   title={Imbedding theorems for classes with weights},
   language={Russian},
   journal={Trudy Mat. Inst. Steklov.},
   translation={
    journal={Amer. Math. Soc. Transl. (2)},
    volume={87},
    pages={121-145},
    date={1970},
   },
   volume={60},
   date={1961},
   pages={282--303},
   issn={0371-9685},
}

\bib{VanSchaftingen_2024}{article}{
   author={Van Schaftingen, Jean},
   title={Injective ellipticity, cancelling operators, and endpoint
   Gagliardo-Nirenberg-Sobolev inequalities for vector fields},
   conference={
      title={Geometric and analytic aspects of functional variational
      principles},
   },
   book={
      series={Lecture Notes in Math.},
      volume={2348},
      publisher={Springer, Cham},
   },
   isbn={978-3-031-67600-0},
   isbn={978-3-031-67601-7},
   date={2024},
   pages={259--317},
   doi={10.1007/978-3-031-67601-7\_5},
}

\bib{VanSchaftingen_Willem_2004}{article}{
   author={Van Schaftingen, J.},
   author={Willem, M.},
   title={Set transformations, symmetrizations and isoperimetric
   inequalities},
   conference={
      title={Nonlinear analysis and applications to physical sciences},
   },
   book={
      publisher={Springer Italia, Milan},
   },
   isbn={88-470-0247-8},
   date={2004},
   pages={135--152},
}

\bib{Willem_2022}{book}{
   author={Willem, Michel},
   title={Functional analysis---fundamentals and applications},
   series={Cornerstones},
   edition={2},
   publisher={Birkh\"auser/Springer, Cham},
   date={2022},
   pages={xv+251},
   isbn={978-3-031-09148-3},
   isbn={978-3-031-09149-0},
   doi={10.1007/978-3-031-09149-0},
}

\end{biblist}
\end{bibdiv}

\end{document}